\documentclass[a4paper,reqno]{article}

\textwidth15cm
\textheight24cm
\hoffset=-1cm
\voffset=-2cm

\usepackage{amsmath,amssymb,amsthm,amsfonts}
\usepackage{color}
\usepackage{hyperref}
\usepackage{bbm} 
\usepackage{mathrsfs} 
\usepackage{mathtools} 
\usepackage{enumerate}
\usepackage{textcmds} 

\newcommand{\set}[1]{{\left \{ #1 \right \}}}
\newcommand{\abs}[1]{{\left | #1 \right |}}
\newcommand{\bra}[1]{{\left( #1 \right) }} 
\newcommand{\sbra}[1]{\left[ #1 \right] }
\newcommand{\sd}{\, \cdot \,}
\newcommand{\one}{\mathbbm{1}}
\newcommand{\R}{\mathbb{R}}
\newcommand{\N}{\mathbb{N}}
\newcommand{\dx}{\:\mathrm{d}}
\newcommand{\dd}{\mathrm{d}}
\newcommand{\norm}[1]{\left\lVert #1 \right\rVert}
\newcommand{\Nfinite}{\mathbf{N}_{<\infty}}
\newcommand{\Nlf}{\mathbf{N}_{\mathrm{lf}}}
\renewcommand{\P}{\mathbb{P}}
\newcommand{\E}{\mathbb{E}}

\numberwithin{equation}{section}

\newtheorem{theorem}{Theorem}[section]
\newtheorem{lemma}[theorem]{Lemma}
\newtheorem{corollary}[theorem]{Corollary}
\newtheorem{proposition}[theorem]{Proposition}

\theoremstyle{remark}
\newtheorem{remark}[theorem]{Remark}

\theoremstyle{definition}
\newtheorem{definition}[theorem]{Definition}

\hyphenation{spe-cifi-cal-ly analo-gous analo-gous-ly in-suf-fi-cient com-pat-i-bil-i-ty cor-re-lat-ed poly-no-mi-al  poly-no-mi-als dy-nam-ics in-ten-si-ty or-thog-o-nal re-spec-tive in-ter-twin-ing neg-a-tive dy-nam-ics}
\begin{document}

\title{Orthogonal Intertwiners for Infinite Particle Systems \\in the Continuum}
\date{\today}
\author{Stefan Wagner\thanks{Mathematisches Institut, Ludwig-Maximilians-Universität, Theresienstr. 39, 80333 München, Germany.} \thanks{Munich Center for Quantum Science and Technology (MCQST), Schellingstr. 4, 80799 München.}}

\maketitle

\begin{abstract}

    This article focuses on a system of sticky Brownian motions, also known as Howitt-Warren martingale problem, and correlated Brownian motions and shows that infinite-dimensional orthogonal polynomials intertwine the dynamics of infinitely many particles and their $n$-particle evolution. The proof is based on two assumptions about the model: information about the reversible measures for the $n$-particle dynamics and consistency.  Additionally, explicit formulas for the polynomials are used, including a new explicit formula for infinite-dimensional Meixner polynomials, the orthogonal polynomials with respect to the Pascal process. As an application of the intertwining relations, new reversible measures for the infinite-particle dynamics are obtained.
    
    \medskip
    \noindent\emph{Keywords}: Correlated Brownian motions; sticky Brownian motions; intertwining relations; orthogonal polynomials; duality; consistency; point processes.
    
    \medskip
    \noindent \emph{Mathematics Subject Classification}: 60J60, 60J25, 82C22.
\end{abstract}
\tableofcontents

\section{Introduction}
Intertwining relations are a powerful tool for establishing a connection between two Markov processes. A linear operator $\Lambda$ \emph{intertwines} Markov processes $(X_t)_{t \geq 0}$ and $(Y_t)_{t \geq 0}$ with Markov semigroups $(P_t)_{t \geq 0}$ and $(Q_t)_{t \geq 0}$ if
    $\Lambda P_t = Q_t \Lambda$
holds for all $t \geq 0$.  
The concept of \emph{intertwining relations} in the context of Markov processes goes back to Dynkin \cite{DynkinIntertwining}. He employed them for the construction of new Markov semigroups based on existing ones. 
Building upon Dynkin's work, Rogers and Pitman expanded these ideas in \cite{intertwiningRogers}, which led to the characterization of Markov functions—--measurable maps that preserve the Markov property. Furthermore, intertwining relations play a crucial role in the analysis of the convergence to equilibrium in the top-to-random shuffle, see \cite{AldousDiaconis}. In \cite{Miclo2018}, intertwining relations are discussed in the context of the algebraic concept of similarity transformation. Moreover, they are used in \cite{ThomaCone} for the construction of Markov processes on infinite-dimensional spaces and in \cite{DiaconisFill} in relation to strong stationary times. Additionally, \cite{IntertwiningFractional} explores their connection with fractional operators, while \cite{IntertwiningReflected}, \cite{IntertwiningDiffusions}, \cite{AssiotisOConnelWarren} delve into their application in the context of diffusions. Intertwining relations for Ehrenfest, Yule and Ornstein-Uhlenbeck processes are studied in \cite{YuleIntertwining}. For additional examples of intertwining relations, we refer to \cite{IntertwiningMarkov} and \cite{JansenOnTheNotationOfDuality}.
Lastly, in \cite{interweaving}, the notion of interweaving relations was introduced.

Intertwining relations are closely related (see, e.g., \cite[Section~5.2]{redig_factorized_2018}, \cite[Lemma~2.1]{Groenevelt2019} and \cite{IntertwiningBetaGamma}) to the concept of stochastic duality that establishes a connection between two Markov processes using a common observable, known as the duality function (see e.g. \cite{JansenOnTheNotationOfDuality}). 
Stochastic duality has found widespread application in diverse fields, including interacting particle systems (see e.g. \cite{liggett_interacting_2005}), population genetics models (see e.g. \cite{EthierKurtz}, \cite{etheridige06},  \cite{DualitySpatiallyFlemingViot}), and (stochastic) partial differential equations (see e.g. \cite{Mueller15}). In \cite{kipnis1982heat}, duality serves as a tool for understanding the non-equilibrium steady state in the KMP model of heat conduction. Self-duality, specifically with orthogonal polynomials, is particularly valuable in studying fluctuation fields, the Boltzmann Gibbs principle, and cumulants in non-equilibrium steady states. This approach is elaborated upon in \cite{DeMasiPresutti}, \cite{ayala2018quantitative}, \cite{ayala2021higher} and \cite{floreani2020orthogonal}.

In the realm of interacting particle systems, dualities and self-dualities are typically explored in the context of lattices.
Self-duality with respect to orthogonal polynomials has been demonstrated in three prominent classical discrete interacting particle systems: the exclusion process,  inclusion process, and independent random walks, as explored in \cite{giardina2007duality}, \cite{DualityHiddenSymmetries}, \cite{franceschini2019stochastic}, \cite{Groenevelt2019}, \cite{carinci2019orthogonal}, and \cite{floreani2020orthogonal}. Self-duality means that the time evolution of an orthogonal polynomial of degree $n$ can be expressed by the time evolution of $n$ dual particles.
This is crucial because it allows us to characterize properties of a system of infinitely many particles by analyzing only a finite system. One shared property of these systems is consistency, which means that the system's time evolution commutes with the action of randomly removing a particle from the system. In \cite{carinci2021consistent}, the relationship between self-duality and consistency for reversible systems was investigated.
    
In \cite{intertwiningConsistent}, a advance was made in the study of consistent particle systems. The authors developed a general method that shows how the orthogonal dualities of these systems rely solely on the conditions of consistency and reversibility. This method does not require an explicit formula of the polynomials and works not only for particle systems on discrete spaces but also on Borel spaces like the real line $\R$. This approach produces then self-intertwiners in terms of infinite-dimensional orthogonal polynomials. 

However, the authors of \cite{intertwiningConsistent} only consider particle systems with a finite number of particles. The arguments presented in \cite{intertwiningConsistent} can apply to systems with infinitely many particles, subject to certain limitations. Specifically, to extend the analysis, it is necessary to have a reversible measure for the dynamics for such systems. Additionally, the concept of consistency alone is insufficient because removing a single particle does not link infinite and finite dynamics. However, it is worth noting that the intertwining relation \cite[Equation~(3.5)]{intertwiningConsistent} in terms of generalized falling factorial polynomials, together with reversibility, implies the intertwining relation \cite[Equation~(3.12)]{intertwiningConsistent} in terms of orthogonal polynomials.

In this article, we present a novel approach for infinite particle systems with particles on a non-discrete space that overcomes these obstacles. Our alternative setup has two key features: firstly, only reversible measures for the unlabeled $n$-particle dynamics are required, which are more manageable to obtain than reversible measures for infinite dynamics. Secondly, we define consistency in a manner that is compatible with an infinite number of particles. Given these requirements, we find that infinite-dimensional polynomials of degree $n$ intertwine the dynamics of an infinite number of particles with the dynamics of $n$ particles where $n < \infty$. Our proof relies on explicit formulas for the infinite-dimensional orthogonal polynomials.

Intertwiners for infinitely many particles have practical applications.
By using the intertwining relation that we take as the definition for consistency we obtain new invariant measures for the dynamics of infinitely many correlated or sticky Brownian motions. 
Exploiting the orthogonality of the intertwining relations in terms of infinite-dimensional polynomials, allows us to enhance this result: we prove that these measures are, in fact, reversible.

We illustrate our procedure by presenting examples of strongly consistent systems, called compatibility by Le Jan and Raimond \cite[Definition~1.1]{LeJanRaimond}. Such systems possess the property that the time-evolution of the process commutes with the removal of any deterministic particle. This characteristic, related to Kolmogorov's extension theorem, enables the construction of an infinite dynamics in a straightforward manner. 
Furthermore, there is a one-to-one correspondence between strongly consistent families and stochastic flows (see \cite{LeJanRaimond}). These flows can be interpreted as independent particles in randomly chosen environments. Brownian flows, such as the Arratia flow \cite{arratia1979coalescing,COALESCINGBROWNIANFLOWS}, the Harris flow \cite{HARRIS1984187}, or the Howitt-Warren flow \cite{howitt-warren2009}, are prominent examples of strongly consistent models. 

We focus on two examples. The first model is a system of correlated Brownian motions. We obtain intertwining relations in terms of infinite-dimensional orthogonal polynomials with respect to the distribution of the Poisson process. Their connection to the multiple Wiener-Itô integral was extensively studied in e.g. \cite{Ogura} or \cite{Surgailis}. The second example refers to a system of sticky Brownian motions, which were introduced by Howitt and Warren via a martingale problem (see \cite{howitt-warren2009}). They are observed as a scaling limit of random walks in random environments, a generalized exclusion process, or through a condensation rescaling of symmetric inclusion process (see e.g. \cite{MultidimensionalSticky}, \cite{ExactFormulas}, and \cite{CondensationSIP}). Sticky interactions are used to model colloids in materials science (see \cite{Stickysphereclusters}). In addition, these interactions have recently received attention in \cite{KPZequation}. We focus on a special case of the Howitt-Warren martingale problem called uniform sticky Brownian motions and studied in \cite{BarraquandRychnovskyLargeDeviations} and \cite{brockingtonbethe}. The multiparticle interactions in this model are entirely determined by two-particle interactions. The dynamics of this model consist of independent Brownian motions if they are apart. If they meet, they exhibit a slowly reflecting interaction behavior, as per Feller's boundary classification \cite{Feller52}. Specifically, the time during which they are equal has a positive Lebesgue measure with positive probability, but does not contain any interval. For this model, we investigate intertwining relations in terms of infinite-dimensional Meixner polynomials that are orthogonal with respect to the distribution of the Pascal process. For this latter family, we obtain a new explicit formula.
Both families of infinite-dimensional orthogonal polynomials belong to the Meixner class (\cite{meixner1934orthogonale}, \cite{LYTVYNOV2003118}). 

The article is organized as follows. In the first two sections, we introduce the two models: correlated Brownian motions (Section~\ref{section: correlated brownian motions}) and sticky Brownian motions (Section~\ref{section: sticky Brownian motions}). We introduce infinite-dimensional polynomials and present our main theorems. In Section~\ref{section: strategy}, we generalize the concept of consistency for infinitely many particles and develop a general approach that works with models beyond the two we present. In Section~\ref{section: proofs}, we prove that strongly consistent systems, including our examples, satisfy the consistency condition. We then establish the intertwining relations using the explicit formula for infinite-dimensional polynomials, and in particular, prove the explicit formula for Meixner polynomials. Finally, we examine the reversible measures for $n$-particle dynamics of correlated and sticky Brownian motion.

\section{Correlated Brownian Motions}
\label{section: correlated brownian motions}

This section focuses on intertwining relations in terms of the multiple Wiener-Itô integral with respect to the Poisson process. While previous works have focused on independent particle systems (see e.g. \cite{Surgailis}, \cite{NonEquilibriumStochasticDynamics} and \cite{KunaHydrodynamicLimits}) and consistent finite particle systems (see \cite{intertwiningConsistent}), we aim to find intertwiners for infinite particle systems with interaction. 

We consider correlated Brownian motions, a simple example within the established setting of \cite{LeJanRaimond}. Notably, this example stands out for its simplicity, as the interaction of the Brownian motions does not depend on local interactions when the particles meet, unlike other correlated Brownian motion models such as the Harris flow \cite{HARRIS1984187} or those presented in \cite{ThreeExamples}.

\subsection{The Model}
We define a family of real-valued stochastic processes $(X_{k,t})_{t \geq 0}$,  $k \in \N$ to be a family of \emph{correlated Brownian motions with pairwise correlation $0 \leq a \leq 1$} starting at a (deterministic) sequence $x = (x_k)_{k \in \N}$ of real numbers if 
\begin{itemize}
    \item The family $(X_{k,t})_{k \in \N, t \geq 0}$ is a Gaussian process.
    \item For all $t \geq 0$ and $k \in \N$: $\E \sbra{X_{k,t}} = x_k$.
    \item For all $t, s \geq 0$ and $k \in \N$: $\mathrm{Cov} \sbra{X_{k, t}, X_{k, s}} = \min \set{t,s}$. 
    \item For all $t, s \geq 0$ and $k, l \in \N$ with $k \neq l$: $\mathrm{Cov} \sbra{X_{k,t}, X_{l,s}} = a\min \set{t,s}$.  
\end{itemize}
If $a=0$, the $(X_{1,t})_{t \geq 0}, (X_{2,t})_{t \geq 0}, \ldots$ are independent Brownian motions. On the other hand, if $a=1$, they are modifications of each other up to an additive constant: for any $k \in \N$ and $t \geq 0$, we have $X_{k,t} = X_{k,1} - x_1 + x_k$ almost surely.

An explicit construction can be done as follows:
Let $(B_t)_{t \geq 0}$, $(B_{1,t})_{t \geq 0}, (B_{2,t})_{t \geq 0}, \ldots$ be independent Brownian motions all starting at zero. From there, define
\begin{align}
    \label{equation: construction correlated brownian motions}
    X_{k,t} := \sqrt{1-a} B_{k,t} + \sqrt{a} B_t + x_k, \qquad k \in \N, t \geq 0. 
\end{align}
A family of correlated Brownian motions satisfies the Markov property. We define the \emph{$n$-particle semigroup} 
\begin{align}
    \label{equation: n particle semigroup correlated brownian motions}
    P_t^{[n]} f_n(x) := \E \sbra{f_n(\sqrt{1-a} B_{1,t} + \sqrt{a} B_t + x_1, \ldots,\sqrt{1-a} B_{n,t} + \sqrt{a} B_t + x_n ) }
\end{align}
for $x = (x_1, \ldots, x_n) \in \R^n$ where $f_n : \R^n \to \R$, $n\in \N$ is bounded and measurable.

\begin{remark}
    \label{remark: strongly consistent family}
    The family of $n$-particle semigroups of correlated Brownian motions is strongly consistent (called \emph{compatibility} by \cite{LeJanRaimond}). More precisely, a family of Markov semigroups $(P_t^{[n]})_{t \geq 0}$ defined on $\R^n$, $n \in \N$, is called \emph{strongly consistent} if  the equation
    \begin{align}
        \label{equation: strong consistency}
        P_t^{[n]} f_n(x_1, \ldots, x_n) = P_t^{[l]} g_l(x_{i_1}, \ldots, x_{i_l}), \qquad x_1, \ldots, x_n \in \R
    \end{align}
    holds for all $l \leq n$, $i_1, \ldots, i_l \in \set{1, \ldots, n}$ pairwise different and $g_l : \R^l \to \R$ bounded and measurable, where $f_n : \R^n \to \R$, $(x_1, \ldots, x_n) \mapsto g_l(x_{i_1}, \ldots, x_{i_l})$.

    We remark that for a general strongly consistent family $(P_t^{[n]})_{t \geq 0}$, $n \in \N$, the existence of a Markov family $\bra{\Omega, \mathcal{F}, (X_t)_{t \geq 0}, (\P_x)_{x \in \R^\infty}}$, $X_t = (X_{k,t})_{k \in \N}$ describing the evolution of infinitely many particles is implied by Kolmogorov’s theorem (see e.g. \cite[Section 1.5.3]{LeJanRaimond}). Thereby, we denote by $\R^\infty$ the set of sequences of real numbers, and equip it with its cylindrical $\sigma$-algebra. $\P_x$ is a probability measure on a probability space $(\Omega, \mathcal{F})$ for each $x \in \R^\infty$ and $X_{k,t}$ is a real-valued random variable for $k \in \N$, $t \geq 0$. $X_0 = x$ holds $\P_x$-almost surely for all $x \in \R^\infty$, the mapping $x \mapsto \P_x[X_t \in A]$ is measurable for all measurable $A \subset \R^\infty$ and $t \geq 0$, and $(X_t)_{t \geq 0}$ satisfies the Markov property with respect to its natural filtration. This family is such that each finite subconfiguration of $l$-particles evolves according to $P_t^{[l]}$. More precisely, for each family $i_1, \ldots, i_l \in \N, l \in \N$ of pairwise different indices, $t \geq 0$ and $x = (x_k)_{k \in \N} \in \R^\infty$, the distribution of $\bra{X_{i_1,t}, \ldots, X_{i_l, t}}$ under $\P_x$ is equal to $P_t^{[l]}((x_{i_1}, \ldots, x_{i_l}), \sd)$. Using standard measure-theoretic arguments, this property extends to infinitely many particles as well: for every injection $s : \N \to \N$, the distribution of $(X_{s(k),t})_{k \in \N}$ under $\P_x$ is equal to the distribution of $(X_{k,t})_{k \in \N}$ under $\P_{(x_{s(k)})_{k \in \N}}$. 

    The correspondence between strongly consistent families and stochastic flows, as outlined in \cite{LeJanRaimond}, is evident in our example of correlated Brownian motions. Indeed, let $Z$ be a random variable that follows the standard normal distribution and let $K_t$ be the random probability kernel such that $K_t(v, \sd)$ is equal to the normal distribution with expected value $\sqrt{at} Z + v$ and variance $(1-a) t$ for $v \in \R$ and $t \geq 0$. By using \eqref{equation: n particle semigroup correlated brownian motions}, we obtain
    \begin{align*}
        P_t^{[n]} f_n(x_1, \ldots, x_n) = \E \sbra{ \int \cdots \int f_n(y_1, \ldots, y_n) K_t(x_1, \dd y_1) \cdots K_t(x_1, \dd y_1) }
    \end{align*}
    for all $t \geq 0$ and $x_1, \ldots, x_n \in \R$.
\end{remark}

\subsection{Unlabeled Dynamics}
Next, we convert the notation for labeled particle systems to the modern point process notation modeling particle configurations on $\R$ using counting measures (see e.g. \cite{LastPenroseLectures}), which employs an unlabeled notation. Let $\mathbf{N}$ denote the space of \emph{counting measures}, i.e., the space of countable sums of measures that assign values in $\N_0$ to every measurable $B \subset \R$. We equip $\mathbf{N}$ with the smallest $\sigma$-algebra $\mathcal{N}$ such that $\mathbf{N} \ni \mu \mapsto \mu(B)$ is measurable for each measurable $B \subset \R$. Since $\R$ is a Borel space, every $\mu \in \mathbf{N}$ is of the form $\mu = \sum_{k=1}^n \delta_{x_k}$ with $n \in \N \cup \set{\infty}$, $x_k \in \R$, where $\delta_x$ denotes the Dirac measure given $x \in \R$, or is equal to the zero measure, which we interpret as the empty configuration, see \cite[Section 1.1]{Kallenberg2017} or \cite[Chapter 6]{LastPenroseLectures}. In particular, $\mu(\R)$ corresponds to the total number of particles. We denote by $\Nfinite := \set{\mu \in \mathbf{N} : \mu(\R) <\infty }$ the set of \emph{finite configurations} and by $\mathbf{N}_n := \set{\mu \in \mathbf{N} : \mu(\R) = n}$ the set of configurations consisting of exactly $n \in \N_0$ particles.

For our purpose, a \emph{Markov family with state space $\mathbf{N}$} is a collection $(\Omega$, $\mathcal{F}$, $(\eta_t)_{t \geq 0}$, $(\P_\mu)_{\mu \in \mathbf{N}})$, consisting of a measurable space $(\Omega, \mathcal{F})$, measurable mappings $\eta_t: (\Omega, \mathcal{F})\to (\mathbf{N}, \mathcal{N})$, $t \geq 0$ and probability measures $\P_\mu, \mu \in \mathbf{N}$ on $(\Omega, \mathcal{F})$ such that 
\begin{itemize}
    \item For each $\mu \in \mathbf{N}$, $\P_\mu[\eta_0=\mu] = 1$.
    \item For each $n \in \N \cup \set{\infty}$, $t \geq 0$ and $A \in \mathcal{N}$, the mapping 
    \begin{align}
        \label{equation: measurability iota}
        \R^n \ni x \mapsto \P_{\iota_n(x)}[\eta_t \in A]
    \end{align}
    is measurable where $\iota_n(x) := \sum_{k=1}^n \delta_{x_k}$, $x = (x_k)_{k=1}^n \in \R^n$.
    \item The Markov property is implicitly assumed to be satisfied with respect to the natural filtration $\mathcal{F}_t := \sigma(\eta_s: 0 \leq s \leq t)$. 
\end{itemize}  
We use the notation $\E_\mu$ for the expected value with respect to the probability measure $\P_\mu$.

\begin{lemma}
\label{lemma: correlated Brownian to unlabeled}
For each $0 \leq a \leq 1$, there exists a Markov family $(\Omega$, $\mathcal{F}$, $(\eta_t)_{t \geq 0}$, $(\P_\mu)_{\mu \in \mathbf{N}})$ with state space $\mathbf{N}$ such that $\eta_t$, $t \geq 0$ is proper and describes the evolution of an unlabeled system of correlated Brownian motions with pairwise correlation $a$. More precisely, for each $\mu = \sum_{k=1}^n \delta_{x_k}$, $x_k \in \R$, $n \in \N \cup \set{\infty}$ the distribution of $(\eta_t)_{t \geq 0}$ under $\P_\mu$ is equal to the distribution of $\sum_{k=1}^n \delta_{X_{k,t}}$ where each $(X_{k,t})_{t \geq 0}$ is starting at $x_k$. 
\end{lemma}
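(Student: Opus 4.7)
The plan is to lift the explicit labeled construction \eqref{equation: construction correlated brownian motions} to the unlabeled setting via a measurable enumeration. On a single background probability space, fix a standard Brownian motion $(B_t)_{t \geq 0}$ and an independent sequence $(B_{k,\cdot})_{k \in \N}$ of independent standard Brownian motions. For each $\mu \in \mathbf{N}$, choose a measurable enumeration $\mu = \sum_{k=1}^{\kappa(\mu)} \delta_{x_k(\mu)}$ with $\kappa(\mu) \in \N_0 \cup \set{\infty}$ and each coordinate $x_k(\sd)$ measurable on $\set{\kappa \geq k}$; such an enumeration exists because $\R$ is Borel (see e.g.\ \cite[Chapter 6]{LastPenroseLectures}). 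Define
\begin{align*}
    \eta_t := \sum_{k=1}^{\kappa(\mu)} \delta_{\sqrt{1-a}\, B_{k,t} + \sqrt{a}\, B_t + x_k(\mu)}
\end{align*}
and let $\P_\mu$ be the induced law on the canonical path space $(\Omega, \mathcal{F})$. By construction $\eta_t$ is proper, $\P_\mu[\eta_0 = \mu] = 1$, and the measurability property \eqref{equation: measurability iota} follows from joint measurability of the enumeration and of the closed-form expression for the atoms.

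The crux is to show that $\P_\mu$ does not depend on the chosen enumeration, for only then does it genuinely pertain to the unlabeled configuration $\mu$. For finite $\mu$ with $\kappa(\mu) = n$, any two enumerations differ by a permutation of $\set{1, \ldots, n}$, and since $(B_{1, \sd}, \ldots, B_{n, \sd})$ are i.i.d.\ the joint law of $(\eta_t)_{t \geq 0}$ is invariant under such a relabelling; for $\kappa(\mu) = \infty$ one argues analogously with a bijection of $\N$ using exchangeability of $(B_{k, \sd})_{k \in \N}$. Crucially, only the i.i.d.\ driving noises are permuted while the common noise $\sqrt{a}\, B_t$ is untouched, so the joint distribution is preserved. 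Once this enumeration-invariance is secured, the marginal agreement with $P_t^{[n]}$ from \eqref{equation: n particle semigroup correlated brownian motions} is immediate, and the Markov property of $(\eta_t)_{t \geq 0}$ with respect to its natural filtration is inherited from the Markov property of the labeled Gaussian system: conditionally on $\mathcal{F}_t$, the future labeled process starts afresh as correlated Brownian motions from the current atoms, whose unlabeled version is $\eta_t$, and by enumeration-invariance the conditional law of $(\eta_{t+s})_{s \geq 0}$ depends on the past only through $\eta_t$.

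The main obstacle I foresee is the enumeration-invariance step in the infinite-configuration regime: one must argue carefully that permuting indices amounts to permuting only the i.i.d.\ sequence $(B_{k, \sd})_{k \in \N}$ while leaving the common noise intact, and that this yields an equality of laws rather than merely of finite-dimensional marginals. Once that is handled, the remaining verifications—measurability of $\mu \mapsto \P_\mu[\eta_t \in A]$, properness of $\eta_t$, and the Markov property—are routine applications of point-process measurability together with the Gaussian Markov structure of the background Brownian motions.
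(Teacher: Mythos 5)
Your proposal is correct and its overall architecture matches the paper's: pick a labeled representative of each configuration $\mu$, show that the induced law of the unlabeled evolution does not depend on which representative was chosen, and then pull the Markov property back from the labeled dynamics. The substantive divergence is in how the label-invariance is argued. You work from the explicit Gaussian representation \eqref{equation: construction correlated brownian motions}, observing that any re-enumeration of $\mu$ amounts to permuting only the i.i.d. sequence $(B_{k,\sd})_{k \in \N}$ while leaving the common noise $B$ fixed, so that exchangeability yields equality of laws (including for bijections of $\N$ when $\kappa(\mu)=\infty$). The paper instead invokes the abstract strong consistency property \eqref{equation: strong consistency} and never touches the Gaussian structure; it also avoids constructing a globally measurable enumeration by appealing only to the weaker measurability requirement \eqref{equation: measurability iota}. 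For correlated Brownian motions the two routes are interchangeable; yours is more elementary and self-contained, whereas the paper's is designed to carry over verbatim to the sticky Brownian case, where no explicit Gaussian construction is available (see the remark following Proposition~\ref{proposition: sticky / corellated brownian motion consistent, conservative}). One caution on the Markov step: since the natural filtration $(\mathcal{F}_t)$ of $\eta$ is only a sub-filtration of the labeled filtration, the clean argument is to take $B \in \mathcal{F}_s$, note it is also measurable for the labeled filtration, apply the labeled Markov property there, and then use label-invariance to see the resulting conditional expectation factors through $\eta_s$; this is exactly what the paper's final display carries out, and is what your closing sentence gestures at but should be made explicit.
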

A random measure $\zeta \in \mathbf{N}$ is said to be \emph{proper} if there exist random variables $Z_k \in \R$, $K \in \N_0 \cup \set{\infty}$ such that
\begin{align}
    \label{equation: zeta proper point process}
    \zeta = \sum_{k=1}^K \delta_{Z_k}
\end{align}
is satisfied. For more details on proper point processes, we refer to \cite{LastPenroseLectures}. 

In Lemma~\ref{lemma: correlated Brownian to unlabeled}, when $n=0$, the summation is considered to be zero. This implies that the empty configuration remains empty at all times. The Markov family can be constructed directly. Only a proof of the Markov property is required, which can be found in Section~\ref{section: proof consistent} below.

\begin{remark}
    For all $n < \infty$ and $t \geq 0$, the measurability of $\mathbf{N}_n \ni \mu \mapsto \P_{\mu}[\eta_t \in A]$ is equivalent to the measurability of \eqref{equation: measurability iota}. For $n = \infty$ the requirement of \eqref{equation: measurability iota} is in general weaker compared to  measurability of the mapping
    \begin{align}
        \label{equation: measurability infinitely many particles}
        \set{\mu \in \mathbf{N} : \mu(\R) = \infty} \ni \mu \mapsto \P_{\mu}[\eta_t \in A]. 
    \end{align}  
    With this approach, we can easily convert the concept of labeled particles into that of unlabeled particles without requiring a detailed analysis of the intricate issue of measurability of \eqref{equation: measurability infinitely many particles}. In particular, in subsequent sections of the article, we always assume that the unlabeled process starts with the distribution of a proper point process.
\end{remark}

\subsection{Multiple Wiener-Itô Integrals for the Poisson Process}
For the sake of completeness, let us recapitulate the multiple Wiener-Itô integral corresponding to the Poisson process, see e.g. \cite{Ogura} or \cite{Surgailis}. The \emph{$k$-th factorial measure} (see e.g. \cite[Eq.~(4.5)]{LastPenroseLectures}) of $\mu=\sum_{i=1}^m \delta_{x_i} \in \mathbf{N}$, $x_i \in \R$, $m \in \N_0 \cup \set{\infty}$ is given by
\begin{align}
\label{equation: factorial measure equals sum}
    \mu^{(k)} = \sum_{\substack{i_1, \ldots, i_k = 1 \\ \text{pairwise different}}}^m \delta_{(x_{i_1}, \ldots, x_{i_k})},
\end{align}
where an empty sum is considered to be equal to the zero measure. The \emph{symmetrization} of a function $f_n : \R^n \to \R$ is defined by taking the average of $f_n$ over all permutations of the coordinates (see e.g. \cite[Eq.~(27)]{LastStochasticAnalysisPoissonProcesses}). In other words, if $\mathfrak{S}_n$ denotes the set of permutations of the set $\set{1, \ldots, n}$, then the symmetrization of $f_n$ is given by
\begin{align}
    \label{equation: definition symmetrization}
    \widetilde{f_n}(x_1,\ldots, x_n) := \frac{1}{n!} \sum_{s \in \mathfrak{S}_n} f_n(x_{s(1)},\ldots, x_{s(n)}), \qquad x_1, \ldots, x_n \in \R. 
\end{align}
A function $f_n$ is called symmetric if $f_n(x_1,\ldots, x_n) = f_n(x_{s(1)}, \ldots, x_{s(n)})$ for all $s \in \mathfrak{S}_n$ and $x_1, \ldots, x_n \in \R$, i.e., $f_n$ is equal to its symmetrization. 

Let $\lambda$ denote the Lebesgue measure on $\R$. The \emph{multiple Wiener-Itô integral of degree $n$} with respect to the Poisson process with intensity measure $\lambda$ (see \cite[Section 12.2]{LastPenroseLectures} or \cite{LastStochasticAnalysisPoissonProcesses}) is defined by
\begin{align}
    \label{equation: orthogonal polynomial: poisson case}
    \notag
    I_n f_n(\mu) := &\sum_{k=0}^n \binom{n}{k} (-1)^{n-k} 
    \\
    &\hspace{4em} \iint \widetilde{f_n}(x_1, \ldots, x_n) \lambda^{\otimes (n-k)}(\dd(x_{k+1}, \ldots, x_n))  \mu^{(k)}(\dd(x_1, \ldots, x_k))
\end{align}
for $\mu \in \Nlf$ and $f_n \in \mathcal{C}_n$. 
$\Nlf$ denotes the set of \emph{locally finite} $\mu \in \mathbf{N}$, i.e., $\mu(B) < \infty$ for measurable, bounded $B \subset \R$ while $\mathcal{C}_n$ denotes the space of bounded, measurable functions $f_n : \R^n \to \R$ with bounded support. 
Furthermore, when integrating with respect to $\mu^{(0)}$ or $\lambda^{\otimes 0}$, we treat the integrals as if they were absent. Specifically, we obtain $I_0 c(\mu) = c$, for $c \in \mathcal{C}_0 := \R$.

The orthogonality relation
\begin{align}
    \label{equation: orthogonality relation poisson}
    \int (I_n f_n) (I_m g_m) \dx \pi_\lambda = \one_{\set{n=m}} n! \int f_n g_m \dx \lambda^{\otimes n}
\end{align}
holds for symmetric $f_n \in \mathcal{C}_n$, $g_m \in \mathcal{C}_m$ where $n, m \in \N_0$ (see e.g. \cite{Surgailis2}, \cite{LastStochasticAnalysisPoissonProcesses}). $\pi_\lambda$ denotes the distribution of the Poisson point process with intensity measure $\lambda$. Hence, $I_n$ can be uniquely extended to a bounded linear operator mapping square-integrable, symmetric functions, denoted by $L^2_\mathrm{sym}(\lambda^{\otimes n})$, to $L^2(\pi_\lambda)$. We put $L^2_{\mathrm{sym}}(\lambda^{\otimes 0}) := \R$.

Additionally, for each $F \in L^2(\pi_\lambda)$, there exists a unique sequence $(f_n)_{n \in \N_0}$ that belongs to the \emph{Fock space} $\bigoplus_{n=0}^\infty \frac{1}{n!} L^2_{\mathrm{sym}}(\lambda^{\otimes n})$, i.e., it satisfies $\sum_{n=0}^\infty \frac{1}{n!} \norm{f_n}_{L^2(\lambda^{\otimes n})}^2 < \infty$. This sequence is such that $F = \sum_{n=0}^\infty \frac{1}{n!} I_n f_n$, referred to as the \emph{chaos decomposition}. 

Furthermore, it is worth noting that $I_n f_n$ has a close relationship with \emph{Charlier polynomials} (see e.g.  \cite{Surgailis2},\cite{Last2011}) which are self-duality functions of independent random walkers (see e.g. \cite{redig_factorized_2018}, \cite{franceschini2019stochastic}). 

The study of non-Gaussian white noise naturally involves infinite-dimensional orthogonal polynomials, as discussed in \cite{berezansky1996infinite-dimensional}. In \cite{Lytvynov2003}, the relationship between chaos decompositions using multiple stochastic integrals with power jump martingales (e.g. \cite{schoutens} or \cite{nualart-schoutens2000}) and polynomial chaos is thoroughly examined.
The use of chaos decompositions is significant in the analysis of Lévy white noise and stochastic differential equations driven by Lévy white noise, as explored in \cite{dinunno-oksendal-proske2004},\cite{lokka-proske2006},\cite{meyerbrandis2008} and \cite{CalculusVariations}. In \cite[Section 3.3]{intertwiningConsistent}, intertwiners based on infinite-dimensional orthogonal polynomials have been used in the context of finite particle systems. To define the space of infinite-dimensional polynomials, we consider polynomials with bounded coefficients and bounded support to ensure their square-integrability, and interpret the functions $u_k$ in the definition below as coefficients. The set of polynomials of degree less or equal to $n \in \N_0$ is defined by
\begin{align}
    \label{equation: polynomials}
    \mathcal{P}_n := \set{\Nlf \ni \mu \mapsto u_0 + \sum_{k=1}^n \int u_k \dx \mu^{\otimes k} : u_0 \in \R, u_k \in \mathcal{C}_k}.
\end{align} 
The \emph{infinite-dimensional orthogonal polynomial of degree $n$} is defined by the orthogonal projection of $\bra{\mu \mapsto \int f_n \dx \mu^{\otimes n}}$ onto $\mathcal{P}_{n-1}^\perp$ if $n \geq 1$ and by the constant function equal to one if $n = 0$. It coincides with $I_n f_n$ $\pi_\lambda$-almost surely for all $f_n \in \mathcal{C}_n$, $n \in \N_0$.

\subsection{Main Result}
According to \cite[Corollary 3.7]{LastPenroseLectures}, there exists a proper Poisson process. Thus, let $\zeta$ be a proper Poisson process with intensity measure given by the Lebesgue measure $\lambda$. Without loss of generality, we can define $\zeta$ on the measurable space $(\Omega, \mathcal{F})$ from Lemma~\ref{lemma: correlated Brownian to unlabeled}, equipped with a probability measure $\P$ and expected value $\E$. Note that $\P \sbra{ \zeta(\R) = \infty} = 1$ due to the fact that $\lambda(\R) = \infty$.
\begin{theorem}
    \label{Theorem: correlated Brownian motions}
    Suppose $0 \leq a \leq 1$. Then, for every $n \in \N$, the multiple Wiener-Itô integral of degree $n$ intertwines the dynamics of infinitely many correlated Brownian motions with pairwise correlation $a$ and their $n$-particle evolution. In other words, 
    \begin{align}
        \label{equation: intertwining Poisson}
        \E_\zeta \sbra{I_n f_n(\eta_t)} = I_n P_t^{[n]} f_n(\zeta)
    \end{align}
    holds almost surely for all $t \geq 0$ and for all functions $f_n \in L^2(\lambda^{\otimes n})$. 
\end{theorem}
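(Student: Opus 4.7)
The plan is to unwind both sides of (\ref{equation: intertwining Poisson}) using the explicit formula (\ref{equation: orthogonal polynomial: poisson case}), convert the expectation on the left into an expression involving $\zeta^{(k)}$, and match it term by term with the corresponding expansion of $I_n P_t^{[n]} f_n(\zeta)$. Both sides of (\ref{equation: intertwining Poisson}) are continuous in $f_n \in L^2(\lambda^{\otimes n})$: the orthogonality relation (\ref{equation: orthogonality relation poisson}) controls the $L^2(\pi_\lambda)$ norm of $I_n$, while $P_t^{[n]}$ is a contraction on $L^2(\lambda^{\otimes n})$ thanks to the translation-invariant structure of (\ref{equation: construction correlated brownian motions}), and moreover $\pi_\lambda$ is invariant for the infinite-particle dynamics (again by inspection of (\ref{equation: construction correlated brownian motions})), so $\|I_n f_n(\eta_t)\|_{L^2(\P)} = \sqrt{n!}\,\|f_n\|_{L^2(\lambda^{\otimes n})}$. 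I may therefore reduce to symmetric $f_n \in \mathcal{C}_n$, a dense subspace of $L^2_{\mathrm{sym}}(\lambda^{\otimes n})$; note that $P_t^{[n]} f_n$ is again symmetric by exchangeability of $(B_{k,t})_{k=1}^n$.

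Applying (\ref{equation: orthogonal polynomial: poisson case}) with $\mu=\eta_t$ and taking $\E_\zeta$, a Fubini step (permitted by the bounded support of $f_n$) gives
\begin{equation*}
    \E_\zeta\sbra{I_n f_n(\eta_t)} = \sum_{k=0}^n \binom{n}{k}(-1)^{n-k} \E_\zeta\sbra{\int g_k(x_1,\ldots,x_k) \,\eta_t^{(k)}(\dd(x_1,\ldots,x_k))},
\end{equation*}
where $g_k(x_1,\ldots,x_k) := \int \widetilde{f_n}(x_1,\ldots,x_n)\,\dd x_{k+1}\cdots \dd x_n$; the same identity with $f_n$ replaced by $P_t^{[n]} f_n$ expands $I_n P_t^{[n]} f_n(\zeta)$. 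I would then invoke the consistency identity
\begin{equation*}
    \E_\mu\sbra{\int g_k \,\dd\eta_t^{(k)}} = \int P_t^{[k]} g_k \,\dd \mu^{(k)},
\end{equation*}
which I expect to be proved in Section~\ref{section: proofs} by applying strong consistency to the joint law of any $k$ labelled particles extracted from $\mu$ via (\ref{equation: factorial measure equals sum}). Taking $\mu=\zeta$ turns the left-hand side of (\ref{equation: intertwining Poisson}) into $\sum_{k=0}^n \binom{n}{k}(-1)^{n-k} \int P_t^{[k]} g_k \,\dd \zeta^{(k)}$.

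To recognise this sum as $I_n P_t^{[n]} f_n(\zeta)$, the remaining identity to verify is the marginalisation formula
\begin{equation*}
    P_t^{[k]} g_k(x_1,\ldots,x_k) = \int [P_t^{[n]} f_n](x_1,\ldots,x_n) \,\dd x_{k+1}\cdots \dd x_n \qquad (0 \leq k \leq n).
\end{equation*}
This follows directly from (\ref{equation: construction correlated brownian motions}): writing $P_t^{[n]} f_n$ as an expectation over the Gaussian shifts $\sqrt{1-a}B_{i,t} + \sqrt{a}B_t$, a translation change of variables in the Lebesgue integrals over $x_{k+1},\ldots,x_n$ absorbs those shifts (using translation invariance of $\lambda$), while the joint law of the remaining shifts restricted to the first $k$ coordinates is, by construction, exactly the $k$-particle noise; what is left is $P_t^{[k]} g_k(x_1,\ldots,x_k)$. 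Substituting back completes the proof for $f_n \in \mathcal{C}_n$, and the $L^2$ continuity noted above extends it to all $f_n \in L^2(\lambda^{\otimes n})$.

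The main obstacles I anticipate are (i) justifying the interchanges of sum, expectation, and integration in the expansion above, which relies on the local finiteness of $\eta_t^{(k)}$ (supplied by the bounded support of $f_n$ and the properness/locally finite nature of $\zeta$) together with a.s.\ finiteness of the involved moments, and (ii) the measurability needed to make $\E_\zeta\sbra{I_n f_n(\eta_t)}$ a random variable on $(\Omega,\mathcal{F})$, which is ultimately reduced to the measurable structure of the Markov family supplied by Lemma~\ref{lemma: correlated Brownian to unlabeled} and Remark~\ref{remark: strongly consistent family}. The structural feature that lets the argument dispense with an a priori reversible measure for the infinite-particle system is the translation invariance of $\lambda$ for the correlated Brownian dynamics, which enters precisely through the marginalisation identity above.
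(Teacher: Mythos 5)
Your proof is correct and its skeleton mirrors the paper's: expand $I_n f_n(\eta_t)$ via \eqref{equation: orthogonal polynomial: poisson case}, pass the expectation through using the consistency identity~\eqref{Equation: New Definition Consistency}, and reduce $\int P_t^{[k]} g_k\,\dd\zeta^{(k)}$ to the $k$-th term of $I_n P_t^{[n]}f_n(\zeta)$ via the marginalisation identity, followed by an $L^2$ density argument. Where you genuinely depart from the paper is in how the marginalisation identity is established. You prove
\begin{equation*}
P_t^{[k]} g_k(x_1,\ldots,x_k) = \int P_t^{[n]} f_n(x_1,\ldots,x_n)\,\dd x_{k+1}\cdots\dd x_n
\end{equation*}
pointwise and directly from the explicit Gaussian representation \eqref{equation: construction correlated brownian motions}, using that conditional on $B_t$ the shifts factorize and that Lebesgue measure is translation invariant. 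The paper instead derives an equivalent statement (Equation~\eqref{equation: condition poisson}, holding only $\lambda^{\otimes k}$-a.e.) entirely from the two abstract hypotheses of Theorem~\ref{Theorem: abstract theorem}: reversibility of $\lambda^{\otimes n}$ for $P_t^{[n]}$ and consistency, via a duality argument that multiplies by a symmetric test function, appeals to reversibility twice, and applies \eqref{equation: consistency in terms of Pt}. The paper's reversibility for correlated motions (Proposition~\ref{Proposition: correlated n-particle reversible}) is itself proved by the same translation change of variables you use, so the underlying symmetry is common to both; but your approach short-circuits the abstract layer. What you gain is a short, self-contained, pointwise identity for this specific model. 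What you lose is generality: your computation leans hard on the random-translation structure and on an a.e.\ pointwise Markov family, so it does not transfer to the sticky Brownian case (Theorem~\ref{Theorem: sticky Brownian motions}), where no comparable explicit flow is available and the paper needs the reversibility/consistency route precisely because $P_t^{[n]}$ has no such closed form. Two small remarks: (i) your invariance of $\pi_\lambda$ ``by inspection'' is correct for this model (conditional on $B_t$, particles undergo i.i.d.\ $\lambda$-preserving Gaussian kicks followed by a common shift), whereas the paper derives invariance via the moment problem and factorial moment measures so that it applies abstractly; (ii) the Fubini interchange you flag as an obstacle is indeed justified because $f_n \in \mathcal{C}_n$ has bounded support, so $\int_{\R^{n-k}}\E\abs{f_n(x+Y)}\,\dd x_{k+1}\cdots\dd x_n \le C\,\lambda(B)^{n-k}<\infty$.
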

The proof of Theorem~\ref{Theorem: correlated Brownian motions} relies on two key components. The notion of consistency, which we introduce along with the abstract framework after Theorem~\ref{Theorem: abstract theorem} in Section~\ref{section: strategy} below, and the following proposition, which follows directly from the definition of correlated Brownian motions.
\begin{proposition}
    \label{Proposition: correlated n-particle reversible} For each $n \in \N$, the measure $\lambda^{\otimes n}$ is reversible for $n$ correlated Brownian motions with pairwise correlation $a$, i.e., $\int (P_t^{[n]} f_n)  g_n \dx \lambda^{\otimes n} = \int (P_t^{[n]} g_n) f_n \dx \lambda^{\otimes n}$ for $t \geq 0$, $f_n, g_n \in \mathcal{C}_n$. 
\end{proposition}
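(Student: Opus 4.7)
The plan is to exploit the fact that $P_t^{[n]}$ is a convolution operator with a centered Gaussian measure. Writing $\xi_{k,t} := \sqrt{1-a}\, B_{k,t} + \sqrt{a}\, B_t$ for $k = 1, \ldots, n$, the formula \eqref{equation: n particle semigroup correlated brownian motions} becomes
\begin{align*}
    P_t^{[n]} f_n(x) = \E \sbra{ f_n(x_1 + \xi_{1,t}, \ldots, x_n + \xi_{n,t}) } = \int f_n(x+y) \, \nu_t(\dd y),
\end{align*}
where $\nu_t$ is the law on $\R^n$ of the centered Gaussian vector $(\xi_{1,t}, \ldots, \xi_{n,t})$.

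The first step is to apply Fubini's theorem to the left-hand side $\int (P_t^{[n]} f_n)\, g_n \dx \lambda^{\otimes n}$ and then use the translation invariance of $\lambda^{\otimes n}$, substituting $z = x+y$ in the inner integral. This rewrites the left-hand side as
\begin{align*}
    \int f_n(z) \int g_n(z - y) \, \nu_t(\dd y) \, \lambda^{\otimes n}(\dd z).
\end{align*}
The second step is to observe that the Gaussian vector $(\xi_{1,t}, \ldots, \xi_{n,t})$ is centered, so $\nu_t$ is symmetric under $y \mapsto -y$. Hence $\int g_n(z-y)\,\nu_t(\dd y) = \int g_n(z+y)\,\nu_t(\dd y) = P_t^{[n]} g_n(z)$, yielding the desired equality.

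Two minor points need justification. First, all applications of Fubini must be legitimate; since $f_n$ and $g_n$ lie in $\mathcal{C}_n$ they are bounded with bounded support, so the double integrals are absolutely finite. Second, the semigroup must be well-defined on the integrand, which is immediate because boundedness of $f_n, g_n$ ensures everything in sight is bounded. There is no real obstacle here — the argument is essentially a three-line verification — so I would present it as a short self-contained calculation rather than a difficult proof.
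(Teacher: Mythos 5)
Your proof is correct and uses essentially the same ingredients as the paper's own argument: Fubini, translation invariance of $\lambda^{\otimes n}$, and the $y \mapsto -y$ symmetry of the centered Gaussian law of $(\xi_{1,t},\ldots,\xi_{n,t})$. The only cosmetic difference is the order in which the change of variables and the symmetry step are applied, so there is nothing further to add.
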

Theorem~\ref{Theorem: correlated Brownian motions} is applicable in finding reversible measures for infinitely many particles. We say that the measure $\pi_\lambda$ is \emph{reversible} for $(\eta_t)_{t \geq 0}$ if 
\begin{align}
    \label{equation: definition reversible}
    \E \sbra{ \E_\zeta \sbra{F(\eta_t)} G(\zeta) } = \E \sbra{ F(\zeta) \E_\zeta \sbra{G(\eta_t)}}
\end{align}
holds for all measurable, bounded functions $F, G: \mathbf{N} \to \R$ and $t \geq 0$. Since $\zeta$ is proper and \eqref{equation: measurability iota} is measurable, $\P_{\zeta} \sbra{F(\eta_t)}$ is measurable. Thus, the outer expected values in \eqref{equation: definition reversible} are well-defined. 

By applying standard arguments, we obtain the following corollary. Specifically, the family of $n$-particle Markov semigroups undergoes a unitary transformation, resulting in the Markov semigroup of unlabeled infinite dynamics. This transformation is the only significant step in the proof.

\begin{corollary}
    \label{corollary: correlated Brownian motions}
    The measure $\pi_\lambda$, which denotes the distribution of the Poisson process with intensity measure $\lambda$, is reversible for $(\eta_t)_{t \geq 0}$, which is an infinite system of unlabeled correlated Brownian motions with pairwise correlation $a$. 
\begin{proof}
    Consider the unitary operator
    \begin{align*}
        \mathfrak{U} : \mathfrak{F} := \bigoplus_{n=0}^\infty \frac{1}{n!} L^2_{\mathrm{sym}}(\lambda^{\otimes n}) \to L^2(\pi_\lambda), \qquad (f_n)_{n \in \N_0} \mapsto \sum_{n=0}^\infty \frac{1}{n!} I_n f_n,
    \end{align*}
    and for each $t\geq 0$, define the operator $P_t : \mathfrak{F} \to \mathfrak{F}$, $(f_n)_{n \in \N_0} \mapsto (P_t^{[n]} f_n)_{n \in \N_0}$ where $P_t^{[0]} f_0 := f_0$, $f_0 \in \R$. 
    By Proposition~\ref{Proposition: correlated n-particle reversible}, we know that $P_t$ is a well-defined, bounded, self-adjoint operator for every $t>0$. Hence, $\mathfrak{U} P_t \mathfrak{U}^{-1}$ is also self-adjoint. Therefore, to complete the proof, it suffices to show that
    \begin{align}
        \label{equation: St equals E zeta F eta t}
        \mathfrak{U} P_t \mathfrak{U}^{-1} F(\zeta) = \E_\zeta \sbra{F(\eta_t)} 
    \end{align}
    holds almost surely for all $F \in L^2(\pi_\lambda)$. Theorem~\ref{Theorem: correlated Brownian motions} implies that \eqref{equation: St equals E zeta F eta t} holds for $F = I_n f_n$, where $f_n \in L^2(\lambda^{\otimes n})$. 

    In the proof of Theorem~\ref{Theorem: abstract theorem} below we show that the operator $L^2(\pi_\lambda) \to L^2(\Omega, \mathcal{F}, \P)$, $F \mapsto \E_\zeta \sbra{F(\eta_t)}$ obtained from the right-hand side of \eqref{equation: St equals E zeta F eta t} is a contraction, as can be seen in \eqref{equation: inequality E zeta} below. Thus, by means of an approximation argument, we conclude that \eqref{equation: St equals E zeta F eta t} holds for all $F \in L^2(\pi_\lambda)$.
\end{proof}
\end{corollary}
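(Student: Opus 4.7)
The plan is to diagonalize the unlabeled dynamics via the chaos decomposition, thereby reducing reversibility on $L^2(\pi_\lambda)$ to the reversibility of each $n$-particle semigroup $P_t^{[n]}$ on $L^2_{\mathrm{sym}}(\lambda^{\otimes n})$, which is the content of Proposition~\ref{Proposition: correlated n-particle reversible}. Concretely I introduce the unitary isomorphism
\begin{align*}
    \mathfrak{U} : \bigoplus_{n=0}^\infty \frac{1}{n!} L^2_{\mathrm{sym}}(\lambda^{\otimes n}) \to L^2(\pi_\lambda), \qquad (f_n)_{n \in \N_0} \mapsto \sum_{n=0}^\infty \frac{1}{n!} I_n f_n,
\end{align*}
furnished by the Wiener-Itô decomposition, together with the diagonal operator $P_t (f_n)_{n \in \N_0} := (P_t^{[n]} f_n)_{n \in \N_0}$ on the Fock space (with $P_t^{[0]} = \mathrm{id}$). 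Proposition~\ref{Proposition: correlated n-particle reversible} makes each $P_t^{[n]}$ a bounded self-adjoint contraction on $L^2_{\mathrm{sym}}(\lambda^{\otimes n})$, so $P_t$ is a bounded self-adjoint operator on the Fock space, and hence so is the conjugate $\mathfrak{U} P_t \mathfrak{U}^{-1}$ on $L^2(\pi_\lambda)$.

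Next I would identify this conjugate with the unlabeled semigroup $F \mapsto \E_\zeta \sbra{F(\eta_t)}$. Theorem~\ref{Theorem: correlated Brownian motions} supplies exactly this identification on the dense subspace of chaos elements $F = I_n f_n$ with $f_n \in L^2_{\mathrm{sym}}(\lambda^{\otimes n})$. To extend the identity by linearity and density to the whole of $L^2(\pi_\lambda)$, I need that $F \mapsto \E_\zeta \sbra{F(\eta_t)}$ is bounded as a map $L^2(\pi_\lambda) \to L^2(\Omega, \mathcal{F}, \P)$; the excerpt announces that such a contraction estimate is established inside the proof of Theorem~\ref{Theorem: correlated Brownian motions}, so I would simply quote it. Once the two bounded operators coincide on a dense set they coincide everywhere. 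Reversibility then follows immediately, since for bounded measurable $F, G : \mathbf{N} \to \R$ (which lie in $L^2(\pi_\lambda)$ because $\pi_\lambda$ is a probability measure) and since $\zeta$ has law $\pi_\lambda$,
\begin{align*}
    \E \sbra{ \E_\zeta \sbra{F(\eta_t)} G(\zeta) } = \langle \mathfrak{U} P_t \mathfrak{U}^{-1} F, G \rangle_{L^2(\pi_\lambda)} = \langle F, \mathfrak{U} P_t \mathfrak{U}^{-1} G \rangle_{L^2(\pi_\lambda)} = \E \sbra{ F(\zeta) \E_\zeta \sbra{G(\eta_t)} }.
\end{align*}

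The main obstacle is the contraction estimate that powers the density argument. Morally it is ``free'' once one knows $\pi_\lambda$ is invariant, but proving invariance is precisely the goal, so the estimate has to be obtained independently through the $n$-particle picture; $F^2$ has nontrivial projections on every chaos, so controlling $\|\E_\zeta[F(\eta_t)]\|_{L^2}$ by $\|F\|_{L^2(\pi_\lambda)}$ forces one to go through the reversibility of $\lambda^{\otimes n}$ in each order simultaneously. Apart from this analytic point, the argument is a soft functional-analytic transfer: a self-adjoint diagonal operator on the Fock space is pushed through the Wiener-Itô isometry to yield a self-adjoint realization of the unlabeled semigroup on $L^2(\pi_\lambda)$, which is exactly the reversibility claimed in \eqref{equation: definition reversible}.
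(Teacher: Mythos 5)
Your proof is essentially the same as the paper's: both conjugate the diagonal Fock-space operator $(f_n)_n \mapsto (P_t^{[n]} f_n)_n$ through the Wiener-Itô isometry $\mathfrak{U}$, use Proposition~\ref{Proposition: correlated n-particle reversible} for self-adjointness, identify $\mathfrak{U} P_t \mathfrak{U}^{-1}$ with $F \mapsto \E_\zeta[F(\eta_t)]$ on chaos elements via Theorem~\ref{Theorem: correlated Brownian motions}, and extend by density using the contraction estimate from the theorem's proof. You additionally spell out the final self-adjointness-to-reversibility step and remark correctly on why the contraction bound must be obtained independently of invariance; both points are consistent with the paper's argument.
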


\section{Sticky Brownian Motions}
\label{section: sticky Brownian motions}

The focus of this section is on sticky Brownian motions. Similar to correlated Brownian motions, this model leads us to infinite-dimensional orthogonal polynomials, specifically, infinite-dimensional Meixner polynomials. These polynomials serve as intertwiners, and they are orthogonal with respect to the distribution of the Pascal process. As an application of the intertwining relations, we obtain a new result for a system of infinitely many sticky Brownian motions: the distribution of the Pascal process is reversible.

Intertwining relations involving infinite-dimensional Meixner polynomials were already studied in \cite[Section 5]{intertwiningConsistent}. The authors examined a version of the symmetric inclusion processes in the continuum. 

\subsection{The Model}

Feller considered in his boundary classification (see \cite{Feller52}) a single reflected Brownian motion that is sticky at the origin at a rate $\theta > 0$. However, it should be noted that this condition is more accurately described as a \qq{slowly} reflecting boundary, rather than \qq{true} stickiness. Specifically, the amount of time that the Brownian motion spends at zero has a strictly positive Lebesgue measure with positive probability, but contains no interval. In contrast, \qq{true} stickiness is characterized by a process getting \qq{stuck} at zero over a random interval of time, as shown e.g. \cite{kabanov2007positive} for an interest rate process.

Based on that, a pair $(X_{1,t}, X_{2,t})_{t \geq 0}$ of Brownian motions with sticky interaction can be described, namely that $(X_{1,t})_{t \geq 0}$ and $(X_{2,t})_{t \geq 0}$ are both Brownian motions and $(\abs{X_{1,t} - X_{2,t}})_{t \geq 0}$ is a single reflected Brownian motion that is sticky at zero, in the sense of Feller. They behave independently when they are apart and interact when they meet. The pair $(X_{1,t}, X_{2,t})_{t \geq 0}$ can be characterized by a martingale problem, as shown in \cite{howitt-warren2009}.

Howitt and Warren (see \cite{howitt-warren2009}) generalized this concept to a family of $n$ diffusions on $\R$, which is commonly known as the \emph{Howitt-Warren martingale problem}. This yields a family of $n$ independent Brownian motions that move separately when they are far apart and coalesce when individual processes meet. A new non-negative quantity $\theta(i:j)$ is introduced, which can be interpreted as the rate at which a group of $i+j$ particles splits into a group of $i$ particles and a group of $j$ particles. Furthermore, the concept of strong consistency (see Remark~\ref{remark: strongly consistent family}) corresponds to the condition $\theta(i+1: j) + \theta(i: j+1) = \theta(i: j)$, which has been characterized in \cite[Lemma A.4]{schertzer2014stochastic} by the existence of a finite measure $\nu$ on the interval $[0,1]$, called \emph{the characteristic measure}, which satisfies
\begin{align*}
    \theta(i:j) = \int x^{i-1} (1-x)^{j-1} \nu(\dd x), \qquad i, j \in \N. 
\end{align*}
For a full construction using the martingale problem, we refer to \cite{howitt-warren2009}. An alternative formulation is available in \cite{schertzer2014stochastic}.

In this article, we examine uniform sticky Brownian motions with stickiness $\theta > 0$ and zero drift, which were studied by \cite{BarraquandRychnovskyLargeDeviations} in the context of large deviation analysis. This model is defined by choosing the characteristic measure $\nu = \frac{\theta}{2} \lambda_{[0,1]}$, where $\lambda_{[0,1]}$ is the Lebesgue measure on $[0, 1]$, leading to 
\begin{align*}
    \theta(i:j) = \frac{\theta}{2} \frac{(i-1)! (j-1)!}{(i+j-2)!}. 
\end{align*}
For this model, the multiparticle interactions are completely determined by the two-particle interactions. The authors of \cite{brockingtonbethe} derive the Kolmogorov backwards equation and demonstrate that, for this particular interaction, it can be solved exactly using the Bethe ansatz. This result is used to obtain the reversible measure for the $n$-particle dynamics. We provide the definition of the uniform sticky Brownian motion from \cite[Definition 2.2]{schertzer2014stochastic} for the sake of completeness.
\begin{definition}
    Let $n \in \N$. We say that $(X_t)_{t \geq 0} = (X_1, \ldots, X_n) = (X_{1,t}, \ldots, X_{n,t})_{t \geq 0}$ are \emph{$n$-particle uniform sticky Brownian motions with stickiness $\theta > 0$} if the following conditions are satisfied. 
    \begin{enumerate}[\normalfont(i)]
        \item $(X_t)_{t \geq 0}$ is a continuous, square-integrable semimartingale. 
        \item The covariation of $X_k$ and $X_l$ is given by
        \begin{align*}
            [X_k, X_l]_t = \int_{0}^t \one_{\set{X_{k,s} = X_{l,s}}} \dx s, \qquad t \geq 0
        \end{align*}
        for $k, l \in \set{1, \ldots, n}$.
        \item For each $\Delta \subset \set{1, \ldots, n}$, $f_\Delta(X_t) - \theta \int_{0}^t \beta_+(g_\Delta(X_s)) \dx s$, $t \geq 0$ is a martingale with respect to the natural filtration of $(X_t)_{t \geq 0}$ where 
        \begin{align*}
            f_\Delta(x) := \max_{k \in \Delta} x_k , \qquad g_\Delta(x) := \abs{\set{k \in \Delta : x_k = f_{\Delta}(x)}}, \qquad x = (x_1, \ldots, x_n) \in \R^n,
        \end{align*}
        $\beta^+(1) := 0$ and $\beta^+(m) := 1 + \frac{1}{2} + \frac{1}{3} + \ldots \frac{1}{m-1}$, $m \geq 2$. 
    \end{enumerate}
\end{definition}
The existence of the $n$-particle uniform sticky Brownian motions starting at an arbitrary initial value $x \in \R^n$ was proven in \cite[Theorem 2.1.]{howitt-warren2009} together with the fact, that their distribution is unique. A family of sticky Brownian motions satisfies the Markov property. Additionally, it is shown that the corresponding $n$-particle semigroups form a strongly consistent family, as described in Remark~\ref{remark: strongly consistent family}.

\subsection{Pascal Process and Infinite-Dimensional Meixner Polynomials}

Let $\alpha := \theta \lambda$, where $\lambda$ denotes the Lebesgue measure on $\R$ and $\theta > 0$ is fixed. The Pascal process, also known as the negative binomial process, is a well-studied point process (see \cite{serfozo1990point}, \cite{berezansky2002pascal} or \cite{kozubowski2008distributional} for further details). We briefly recall the construction of a Pascal process. A \emph{Pascal process} with parameters $0 < p < 1$ and $\alpha$ is a random measure $\zeta \in \mathbf{N}$ satisfying the following properties.

\begin{enumerate}[\normalfont(i)]
    \item The random variables $\zeta(A_1), \ldots, \zeta(A_N)$ are independent if the sets $A_1, \ldots, A_N \subset \R$ are measurable and pairwise disjoint. 
    \item For each measurable $A \subset \R$ such that $0 < \alpha(A) < \infty$, the random variable $\zeta(A)$ follows the negative binomial distribution with parameters $p$ and $\alpha(A)$, i.e.,
    \begin{align*}
        \P[\zeta(A) = k] = \alpha(A) (\alpha(A) + 1) \cdots (\alpha(A) + k-1) \frac{p^k}{k!} (1-p)^{\alpha(A)}, \qquad k \in \N_0.
    \end{align*}
    If $\alpha(A) = 0$, then $\zeta(A) = 0$ almost surely. If $\alpha(A) = \infty$, then $\zeta(A) = \infty$ almost surely. 
\end{enumerate}
The Pascal process is a compound Poisson process. Specifically, if we consider a Poisson process $\xi$ on $\R \times \N$ with intensity measure $\alpha \otimes \sum_{k=1}^\infty \frac{p^k}{k} \delta_k$, then $\zeta(A) := \int_{A \times \N} y \xi(\dd(x,y))$ defines a Pascal process. In particular, the Pascal process can be constructed as a proper point process, meaning it has the form \eqref{equation: zeta proper point process}. For more details on Pascal processes, see also  \cite{CompletelyRandomMeasures} and \cite{intertwiningConsistent}. 

As a reminder, the symmetrization $\widetilde{f_n}$ of a function $f_n$ is defined as in \eqref{equation: definition symmetrization}. We put for each $\mu \in \Nlf$ and $f_n \in \mathcal{C}_n$ 
\begin{align}
    \label{equation: infinite-dimensional Meixner polynomials}
    \mathcal{M}^{p, \alpha}_n f_n(\mu) &:= \sum_{k=0}^n  \binom{n}{k} \bra{1-\frac{1}{p}}^{k-n}  \iint \widetilde{f_n}(x_1, \ldots, x_n) \notag \\
    &\hspace{4em}\kappa_{n,k}(x_1, \ldots, x_k, \dd(x_{k+1}, \ldots, x_n)) \mu^{(k)}(\dd (x_1, \ldots, x_k)),
\end{align}
the \emph{infinite-dimensional Meixner polynomial of degree $n$}. The kernel $\kappa_{n,n-1} : \R^{n-1} \times \mathcal{B}(\R) \to [0, \infty) \cup \set{\infty}$ is defined by
\begin{align*}
    \kappa_{n, n-1}((x_1, \ldots, x_{n-1}), \sd) := \alpha + \delta_{x_1} + \ldots + \delta_{x_{n-1}}, 
\end{align*}
while $\kappa_{n, k} : \R^k \times \mathcal{B}(\R^{n-k}) \to [0, \infty) \cup \set{\infty}$ is defined by $\kappa_{n, k} := \kappa_{k+1, k} \otimes \kappa_{k+2, k+1} \otimes \cdots \otimes \kappa_{n, n-1}$. In other words,  
\begin{align}
    \label{equation: definition kappa}
    \notag
    &\kappa_{n, k}((x_1, \ldots, x_k), \dd( x_{k+1}, \ldots, x_n)) = (\alpha + \delta_{x_1} + \ldots + \delta_{x_{n-1}})(\dd x_n) \\
    &\hspace{8em} \cdots (\alpha + \delta_{x_1} + \ldots + \delta_{x_{k+1}})(\dd x_{k+2}) (\alpha + \delta_{x_1} + \ldots + \delta_{x_k})(\dd x_{k+1}).
\end{align}
We interpret the case $n = k$ in \eqref{equation: infinite-dimensional Meixner polynomials} as though the inner integral was not present. If $k=0$, $\kappa_{n,0}$ reduces to a measure on $\R^n$, and we interpret the outer integral with respect to $\mu^{(0)}$ as though it was not present. If $n = 0$, we put $\mathcal{M}^{p, \alpha}_n f_0 := f_0$ for $f_0 \in \mathcal{C}_0 = \R$. Let $\mathcal{P}_n$ denote the set of polynomials of degree less than or equal to $n$, as defined by \eqref{equation: polynomials}.

Proposition~\ref{proposition: explicit formula infinite-dimensional Meixner} below demonstrates that the polynomials $\mathcal{M}_n^{p, \alpha} f_n$ defined in \eqref{equation: infinite-dimensional Meixner polynomials} coincide with the infinite-dimensional orthogonal polynomials with respect to the distribution of the Pascal process. These polynomials are investigated, e.g., in \cite[Section~5 and~6]{Lytvynov2003} and \cite[Section~5.2]{intertwiningConsistent}. The proof of this proposition is provided in Section~\ref{section: explicit formula Meixner} below. 
Equation~\eqref{equation: infinite-dimensional Meixner polynomials} is a novel and explicit formula for the infinite-dimensional orthogonal polynomials following a format similar to that of the Wiener-Itô integrals \eqref{equation: orthogonal polynomial: poisson case}. Notably, our formula differs from the one presented in \cite[Equation~(6.4)]{Lytvynov2003}, which only provides a recursion. 
Our explicit formula is crucial in the proof of Theorem~\ref{Theorem: sticky Brownian motions} below, where we exploit its structure as a sum of integrals involving factorial measures and the kernels $\kappa_{n,k}$. Furthermore, we use Proposition~\ref{proposition: explicit formula infinite-dimensional Meixner} in that proof and in the proof of Corollary~\ref{corollary: sticky Brownian motions} below, more precisely, we exploit the orthogonality relation~\eqref{equation: orthogonality infinite dimensional Meixner} below. 

\begin{proposition}
    \label{proposition: explicit formula infinite-dimensional Meixner}
    For each $n \in \N$ and $f_n \in \mathcal{C}_n$,
    \begin{align}
        \label{equation: explicit formula infinite-dimensional Meixner}
        \mathcal{M}^{p, \alpha}_n f_n = \text{orthogonal projection of } \bra{\mu \mapsto \mu^{\otimes n}(f_n)} \text{ onto } \mathcal{P}_{n-1}^\perp
    \end{align}
    holds $\rho_{p, \alpha}$-almost surely. 
\end{proposition}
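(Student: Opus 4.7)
The plan is to verify that $\mathcal{M}^{p,\alpha}_n f_n$ fulfils the two defining properties of the orthogonal projection of $\bra{\mu \mapsto \mu^{\otimes n}(f_n)}$ onto $\mathcal{P}_{n-1}^\perp$: namely (i) $\mathcal{M}^{p,\alpha}_n f_n(\mu) - \mu^{\otimes n}(f_n) \in \mathcal{P}_{n-1}$ pointwise on $\Nlf$, and (ii) $\mathcal{M}^{p,\alpha}_n f_n$ is $L^2(\rho_{p,\alpha})$-orthogonal to every element of $\mathcal{P}_{n-1}$. Since both conditions are linear in $f_n$ and the right-hand side of \eqref{equation: infinite-dimensional Meixner polynomials} only depends on the symmetrisation $\widetilde{f_n}$, I would first reduce to the case of symmetric $f_n \in \mathcal{C}_n$.

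For (i), the summand with $k = n$ in \eqref{equation: infinite-dimensional Meixner polynomials} equals $\mu^{(n)}(f_n)$, and the standard identity expressing the $n$-th factorial measure as $\mu^{\otimes n}$ minus its diagonal contributions (cf.\ \cite[Ch.~4]{LastPenroseLectures}) gives $\mu^{(n)}(f_n) = \mu^{\otimes n}(f_n) + R(\mu)$ with $R \in \mathcal{P}_{n-1}$. Each summand with $k < n$ is an integral of a bounded, bounded-support function against $\mu^{(k)}$, hence a polynomial of degree at most $k$ in $\mu$, and consequently belongs to $\mathcal{P}_{n-1}$. Summation establishes (i).

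For (ii), by bilinearity and density it is enough to show that $\E \sbra{\mathcal{M}^{p,\alpha}_n f_n(\zeta) \cdot \zeta^{\otimes m}(u_m)} = 0$ for every $m \in \set{0, \ldots, n-1}$ and every $u_m \in \mathcal{C}_m$. The key analytic input is a Papangelou-type integration-by-parts identity for the Pascal process, of the form $\E \sbra{\int F(\zeta, x)\zeta(\dx)} = p\, \E \sbra{\int F(\zeta + \delta_x, x)(\alpha + \zeta)(\dx)}$, which one derives from the Mecke equation applied to the marked Poisson process $\xi$ using the compound-Poisson representation $\zeta(A) = \int y\, \xi(\dd(x,y))$ recalled above the proposition, together with the mark identity $k \cdot p^k/k = p \cdot p^{k-1}$ summed over $k \geq 1$. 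The measure $\alpha + \sum_i \delta_{x_i}$ that defines $\kappa_{n,k}$ in \eqref{equation: definition kappa} is precisely this Papangelou kernel after iterated insertion of points, and the coefficient $\bra{1 - 1/p}^{k-n}$ is engineered so that iterated application of the Papangelou identity recasts $\E \sbra{\mathcal{M}^{p,\alpha}_n f_n(\zeta)\zeta^{\otimes m}(u_m)}$ as a single alternating binomial sum whose terms cancel exactly when $m < n$.

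The main obstacle is the bookkeeping in (ii): keeping track of the many ways in which points inserted by successive Papangelou steps can coincide with points coming from $\zeta^{(k)}$ and from $\zeta^{\otimes m}$, and recognising the resulting combinatorial sum as a vanishing finite difference of order $n$ applied to a polynomial of degree at most $m < n$. A cleaner alternative may proceed via the exponential generating series $\sum_n \tfrac{1}{n!} \mathcal{M}^{p,\alpha}_n \phi^{\otimes n}(\mu)$: if one shows it factorises as a deterministic prefactor $N(\phi)$ times a multiplicative functional $\prod_{x \in \mu}(1 + c_\phi(x))$, then orthogonality of all $\mathcal{M}^{p,\alpha}_n$ collapses to a single identity involving the Laplace functional of the Pascal process, which is already available in closed form. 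Either route ultimately rests on the identification of $\kappa_{n,k}$ as the iterated Papangelou kernel of $\zeta$.
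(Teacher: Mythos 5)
Your strategy is genuinely different from the paper's. The paper proves \eqref{equation: explicit formula infinite-dimensional Meixner} by first working out the special case $f_m = \one_{B_1^{d_1}\times\cdots\times B_N^{d_N}}$, showing via Lemmas~\ref{lemma: Meixner 1}--\ref{lemma: explicit meixner product formula} that $\mathcal{M}^{p,\alpha}_m f_m(\mu)$ factorises into the product $\prod_k \mathscr{M}^{p,\alpha(B_k)}_{d_k}(\mu(B_k))$ of univariate monic Meixner polynomials, importing the analogous factorisation of the orthogonal-projection side from \cite[Proposition 5.3]{intertwiningConsistent} and \cite[Lemma 3.1]{LYTVYNOV2003118}, and then extending to all of $\mathcal{C}_m$ by the functional monotone class theorem. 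You instead work with the two defining properties of the projection directly. What you would buy is a self-contained, structure-revealing proof that does not appeal to the univariate theory; what you pay is the combinatorics in step (ii).

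Step (i) is fine: the $k=n$ term of \eqref{equation: infinite-dimensional Meixner polynomials} is $\mu^{(n)}(\widetilde{f_n})$, the standard expansion of $\mu^{\otimes n}$ over partitions expresses $\mu^{(n)}(\widetilde{f_n}) - \mu^{\otimes n}(\widetilde{f_n})$ as a linear combination of integrals of bounded, boundedly supported functions against $\mu^{(j)}$, $j<n$, and each lower-order term is a polynomial of degree $\le k < n$; one only has to note that the inner $\kappa_{n,k}$-integral stays bounded with bounded support because $\widetilde{f_n}$ is supported in $B^n$ for a bounded $B$, so that each factor contributes at most $\alpha(B) + n$.

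Step (ii) is where the real gap is. Two separate issues. First, the Papangelou identity you state, $\E\sbra{\int F(\zeta,x)\,\zeta(\dd x)} = p\,\E\sbra{\int F(\zeta+\delta_x,x)\,(\alpha+\zeta)(\dd x)}$, is correct for the Pascal process, but it does \emph{not} follow from Mecke on the marked Poisson representation in the way you sketch. Applying Mecke to $\xi$ and using $k\cdot p^k/k = p^k$ gives $\E\sbra{\sum_{k\ge1} p^k \int F(\zeta+k\delta_x,x)\,\alpha(\dd x)}$, which inserts $k$ copies of a single point weighted by $p^k$, not a single insertion weighted by the Papangelou kernel $p(\alpha+\zeta)$. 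The two sides are equal in aggregate but the derivation as written does not yield the GNZ form; one should either establish it from the univariate recurrence $(k+1)\P[X=k+1]=p(a+k)\P[X=k]$ and independence of increments, or simply cite it. Second, and more significantly, the cancellation of $\E\sbra{\mathcal{M}^{p,\alpha}_n f_n(\zeta)\,\zeta^{\otimes m}(u_m)}$ for $m<n$ after iterated application of this identity is asserted but not argued; you yourself flag the bookkeeping as the main obstacle and the generating-function alternative as conjectural. Until that cancellation is carried out, this step remains an outline rather than a proof. The paper sidesteps exactly this by reducing to a finite-dimensional computation with known one-dimensional Meixner polynomials.
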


\begin{remark}
    \label{remark: univariate meixner}
    The infinite-dimensional Meixner polynomials, given by \eqref{equation: infinite-dimensional Meixner polynomials}, can be regarded as a natural extension of the monic univariate Meixner polynomials with parameters $0 < p < 1$ and $\alpha > 0$. The \emph{monic Meixner polynomials} are expressed as follows (e.g. \cite{HypergeometricOrthogonalPolynomials}):
    \begin{align}
        \label{equation monic Meixner}
        \mathscr{M}^{p, a}_n(x) = \sum_{k=0}^n \binom{n}{k} \bra{1-\frac{1}{p}}^{k-n} (\alpha + k)^{(n-k)} (x)_k, \qquad n, x \in \N_0.
    \end{align}
    \label{remark: univariate Meixner}
    In the infinite-dimensional version of these polynomials, the \emph{rising factorial} (also known as the \emph{Pochhammer symbol}), $(a + k)^{(n-k)} = (a+n-1) \cdots (a + k + 1) (a + k)$, where $(a + k)^{(0)} = 1$, turns into an integration with respect to the kernel $\kappa_{n, k}$, while the \emph{falling factorial}, $(x)_k = x (x-1) \cdots (x-k+1)$, where $(x)_0 := 1$, occurs as an integration with respect to the factorial measure $\mu^{(k)}$. These relationships imply that if $f_n$ is wisely chosen, \eqref{equation: infinite-dimensional Meixner polynomials} reduces to a product of univariate polynomials, as shown in Lemma~\ref{lemma: explicit meixner product formula} below. 
\end{remark}

We denote by $\Sigma_n$ the set of partitions $\sigma$ of the set $\set{1, \ldots, n}$. For a function $f_n : \R^n \to \R$ denote by $(f_n)_{\sigma} : \R^{\abs{\sigma}} \to \R$ the function gained by identifying the variables belonging to the same $A \in \sigma$ in the order of occurrence. Put
\begin{align}
    \label{equation: definition lambda n}
    \lambda_n := \sum_{\sigma \in \Sigma_n} \bra{\prod_{A \in \sigma}  (\abs{A}-1)!}  \alpha_\sigma
\end{align}
where $\alpha_\sigma$ is the measure defined by $\int f_n \dx \alpha_\sigma = \int (f_n)_\sigma \dx \alpha^{\otimes \abs{\sigma}}$. Note that
\begin{align}
    \label{equation: lambda n equals kappa lambda k}
    \lambda_n = \kappa_{n,0} = \lambda_k \otimes \kappa_{n, k}
\end{align} 
for all $n \geq k \geq 0$. 
The measure $\lambda_n$ is closely related to the Pascal process in two ways. Firstly, the definition of the Pascal process implies the following proposition.
\begin{proposition}
\label{proposition: factorial moment measure Pascal process}
The $n$-th factorial moment measure of the Pascal process with parameters $p, \alpha$ is given by $(\frac{p}{1-p})^n \lambda_n$ for each $n \in \N$, i.e., $\E \sbra{\zeta^{(n)}(A)} = (\frac{p}{1-p})^n \lambda_n(A)$ for $A \subset \R^n$ measurable. 
\end{proposition}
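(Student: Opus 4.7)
The plan is to reduce the computation to the compound Poisson representation $\zeta = \sum_k y_k \delta_{x_k}$ recalled in the paragraph preceding the proposition, where $\xi = \sum_k \delta_{(x_k, y_k)}$ is the Poisson process on $\R \times \N$ with intensity $\alpha \otimes \nu$ and $\nu := \sum_{k \geq 1} \frac{p^k}{k} \delta_k$, and then to apply the multivariate Mecke formula for $\xi$.

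The first step is combinatorial: regard $\zeta$ as the proper point process obtained by listing each $\xi$-atom $x_k$ with multiplicity $y_k$, so that $\zeta^{(n)}(A)$ sums $\mathbf{1}_A$ over ordered $n$-tuples of distinct \emph{copies}. Each such tuple induces a set partition $\sigma \in \Sigma_n$ via the equivalence relation ``come from the same atom of $\xi$''. For a fixed $\sigma$ with blocks of sizes $m_i = |B_i|$ and $r = |\sigma|$, the tuples producing $\sigma$ are parametrised by an ordered $r$-tuple $(a_1, \ldots, a_r)$ of distinct atoms of $\xi$ (one per block) together with, inside each block $B_i$, an ordered choice of $m_i$ distinct copies out of the $y_{a_i}$ available, contributing the combinatorial factor $\prod_i y_{a_i}^{(m_i)}$. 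The indicator $\mathbf{1}_A$ then sees the tuple $(x_{a_{\beta(l)}})_{l=1}^n$, where $\beta(l)$ is the block index of $l$.

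The second step applies the multivariate Mecke/Campbell formula, by which the $r$-th factorial moment measure of $\xi$ equals $(\alpha \otimes \nu)^{\otimes r}$. Since the integrand factorises between the $y_i$ and $x_i$ variables, the expectation becomes
\begin{align*}
\E[\zeta^{(n)}(A)] = \sum_{\sigma \in \Sigma_n} \left( \prod_{i=1}^r \int y^{(m_i)} \, \nu(\dd y) \right) \alpha_\sigma(A),
\end{align*}
where the $x$-integral is precisely $\alpha_\sigma(A)$ by the definition of $\alpha_\sigma$. A direct calculation with $\nu$ gives
\begin{align*}
\int y^{(m)} \, \nu(\dd y) = \sum_{k \geq m} \frac{p^k}{k} \, k^{(m)} = (m-1)! \left(\frac{p}{1-p}\right)^m,
\end{align*}
after the substitution $k = j + m$ and the identity $\sum_{j \geq 0} \binom{j+m-1}{m-1} p^j = (1-p)^{-m}$. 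Since $\sum_i m_i = n$, the product aggregates to $\left(\frac{p}{1-p}\right)^n \prod_{B \in \sigma}(|B|-1)!$, and summing over $\sigma$ yields exactly $\left(\frac{p}{1-p}\right)^n \lambda_n(A)$ by the definition \eqref{equation: definition lambda n}.

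I expect the main obstacle to be the combinatorial book-keeping in the first step: since each $\xi$-atom with $y_k \geq 2$ contributes several coincident points of $\zeta$, one must count ordered $n$-tuples of distinct \emph{copies} rather than distinct positions in $\R$, and the factors $y_{a_i}^{(m_i)}$ must be organised correctly across blocks. A sanity check is the case $A = A_0^n$ with $\alpha(A_0) = a$, where the formula reduces to the classical factorial moment $\E[\zeta(A_0)^{(n)}] = a^{(n)} (p/(1-p))^n$ of the negative binomial distribution, via the identity $a^{(n)} = \sum_{\sigma \in \Sigma_n} \prod_{B \in \sigma} (|B|-1)! \, a^{|\sigma|}$ expressing rising factorials through unsigned Stirling numbers of the first kind.
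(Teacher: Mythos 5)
Your proof is correct, and it takes a genuinely different route from the paper's. The paper's argument is purely at the level of the Pascal process's defining properties: it restricts to rectangles $A = A_1^{d_1}\times\cdots\times A_N^{d_N}$ with disjoint factors, uses independence over disjoint sets together with the factorial moments of the negative binomial distribution to compute $\E\,\zeta^{(n)}(A)$ as a product of rising factorials times $(p/(1-p))^n$, invokes Lemma~\ref{lemma: Meixner 2} to get the same product for $\lambda_n(A)$, and then extends by a monotone-class argument. You instead go through the compound Poisson representation $\zeta(\sd)=\int_{\cdot\times\N}y\,\xi(\dd(x,y))$, decompose ordered $n$-tuples of distinct copies by the set partition encoding ``same $\xi$-atom,'' apply the multivariate Mecke formula to $\xi$, and then assemble $\lambda_n$ directly from its defining partition sum \eqref{equation: definition lambda n}. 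Your approach works for arbitrary measurable $A$ at once (no rectangle reduction or extension step) and makes the combinatorial origin of $\lambda_n$ transparent; the paper's version, by contrast, is shorter because it can recycle Lemma~\ref{lemma: Meixner 2} and the one-dimensional negative binomial fact. One small notational slip: you write $y^{(m_i)}$ for the number of ordered choices of $m_i$ distinct copies out of $y$, which in this paper's notation would be the rising factorial; you mean the falling factorial $(y)_{m_i}=y(y-1)\cdots(y-m_i+1)$, and your subsequent computation of $\int(y)_m\,\nu(\dd y)=(m-1)!\,(p/(1-p))^m$ is in fact carried out with the falling factorial, so the result stands.
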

Secondly, the orthogonality relation
\begin{align}
    \label{equation: orthogonality infinite dimensional Meixner}
    \int (\mathcal{M}^{p, \alpha}_n f_n)(\mathcal{M}^{p, \alpha}_m g_m) \dx \rho_{p, \alpha} = \one_{\set{n = m}} \frac{p^n n!}{(1-p)^{2n}} \int f_n g_m \dx \lambda_n
\end{align}
holds for symmetric $f_n \in \mathcal{C}_n$, $g_m \in \mathcal{C}_m$ (see \cite{Lytvynov2003}, \cite{intertwiningConsistent}). Hence, the linear operator $\mathcal{M}^{p, \alpha}_n$ can be extended continuously to symmetric, measurable, square-integrable functions, which are denoted by $L^2_{\mathrm{sym}}(\lambda_n)$. This extension allows us to decompose any function $F \in L^2(\rho_{p, \alpha})$ into a series that converges in $L^2(\rho_{p, \alpha})$, given by $F = \sum_{n=0}^\infty \frac{(1-p)^n}{n!} \mathcal{M}^{p, \alpha}_n f_n$. The sequence $(f_n)_{n \in \N_0}$ is contained in a space called the \emph{extended anyon Fock space}, as defined in \cite{Lytvynov2003}, \cite{LYTVYNOV2003118}, \cite{bozejko2015extended}. This space is defined as the set of sequences $(f_n)_{n \in \N_0}$ that satisfy the condition $\sum_{n=0}^\infty \frac{p^n}{n!} \norm{f_n}_{L^2_{\mathrm{sym}}(\lambda_n)}^2 < \infty$, where $L^2_{\mathrm{sym}}(\lambda_0)$ understood as $\R$. 

\subsection{Main Result}
For each $x \in \R^\infty$, consider a family of stochastic processes $(X_{k,t})_{k \in \N}$, $t \geq 0$ such that that for any pairwise different $i_1, \ldots, i_n \in \N$, the distribution of $(X_{i_1, t}, \ldots, X_{i_n, t})$, $t \geq 0$ is the same as that of the $n$-particle uniform sticky Brownian motions with stickiness $\theta > 0$ starting at $(x_{i_1}, \ldots, x_{i_n})$. That sequence exists using strong consistency and Kolmogorov’s theorem. Let $(P_t^{[n]})_{t \geq 0}$ denote the Markov semigroup of the $n$-particle uniform sticky Brownian motions with stickiness $\theta$. Consider a Markov family $(\Omega$, $\mathcal{F}$, $(\eta_t)_{t \geq 0}$, $(\P_\mu)_{\mu \in \mathbf{N}})$, which is an unlabeled version of $(X_{k,t})_{k \in \N}$, $t \geq 0$. This construction is analogous to the one described in Lemma~\ref{lemma: correlated Brownian to unlabeled}. Let $\zeta$ denote a Pascal process with parameters $p$ and $\alpha$ that is proper, meaning it has the form \eqref{equation: zeta proper point process}. Recall that $\lambda$ denotes the Lebesgue measure on $\mathbb{R}$.

\begin{theorem}
    \label{Theorem: sticky Brownian motions}
    Suppose $\theta > 0$ and put $\alpha = \theta \lambda$. Then, for every $n \in \N$ and $0<p<1$, the infinite-dimensional Meixner polynomial of degree $n$ intertwines the dynamics of infinitely many uniform sticky Brownian motions with stickiness $\theta$ and their $n$-particle evolution. In other words,  
    \begin{align}
        \E_\zeta \sbra{\mathcal{M}^{p, \alpha}_n f_n(\eta_t)} = \mathcal{M}^{p, \alpha}_n P_t^{[n]} f_n(\zeta)
    \end{align}
    holds almost surely for all $t \geq 0$ and for all functions $f_n \in L^2(\lambda_n)$. 
\end{theorem}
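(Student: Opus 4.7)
The plan is to follow the same strategy as the proof of Theorem~\ref{Theorem: correlated Brownian motions}, supplying three sticky-specific inputs to the general framework of Section~\ref{section: strategy}: (a) consistency of the uniform sticky Brownian motion family, following from its strong consistency via the results of Section~\ref{section: proofs}; (b) reversibility of the measure $\lambda_n$ from \eqref{equation: definition lambda n} for the $n$-particle semigroup $P_t^{[n]}$, obtained via the Bethe-ansatz analysis of the Howitt-Warren dynamics; and (c) the explicit formula for $\mathcal{M}^{p,\alpha}_n$ from Proposition~\ref{proposition: explicit formula infinite-dimensional Meixner}.

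Assuming these three ingredients, I would first take symmetric $f_n \in \mathcal{C}_n$ and expand $\mathcal{M}^{p,\alpha}_n f_n(\eta_t)$ using \eqref{equation: infinite-dimensional Meixner polynomials}. Interchanging $\E_\zeta$ with the finite sum and integrals (legitimate by boundedness and compact support) and invoking the consistency condition term by term converts each $\E_\zeta \sbra{\int h_k \dx \eta_t^{(k)}}$ into $\int P_t^{[k]} h_k \dx \zeta^{(k)}$, where $h_k(x_1, \ldots, x_k) := \int \widetilde{f_n}(x_1, \ldots, x_n) \, \kappa_{n,k}((x_1, \ldots, x_k), \dx(x_{k+1}, \ldots, x_n))$. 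Comparing with the corresponding expansion of $\mathcal{M}^{p,\alpha}_n P_t^{[n]} f_n(\zeta)$, the theorem reduces for each $0 \leq k \leq n$ to the identity that $P_t^{[k]} h_k$ coincides $\lambda_k$-almost everywhere with the analogous partial integral of $\widetilde{P_t^{[n]} f_n}$ against $\kappa_{n,k}$. Pairing both sides against a test function $g_k \in L^2(\lambda_k)$ and combining the factorization $\lambda_n = \lambda_k \otimes \kappa_{n,k}$ from \eqref{equation: lambda n equals kappa lambda k}, reversibility of $\lambda_k$ for $P_t^{[k]}$, the permutation symmetry of $P_t^{[n]}$ (a consequence of strong consistency), and reversibility of $\lambda_n$ for $P_t^{[n]}$, both sides collapse to $\int \widetilde{f_n} \, P_t^{[n]}(g_k \otimes 1) \dx \lambda_n$, where $g_k \otimes 1$ denotes $g_k$ extended to $\R^n$ as a function of its first $k$ coordinates only.

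The main obstacle is verifying this identity, since $\kappa_{n,k}$ records diagonal coincidences at precisely the sites of non-trivial sticky interaction, ruling out any pointwise semigroup manipulation; reversibility of $\lambda_n$ together with strong consistency is the essential tool that converts a partial integration against $\kappa_{n,k}$ into the required intertwining between the $n$- and $k$-particle semigroups. Extension from $\mathcal{C}_n$ to $L^2(\lambda_n)$ is then routine, achieved by density together with a contraction estimate for $F \mapsto \E_\zeta \sbra{F(\eta_t)}$ on $L^2(\rho_{p, \alpha})$; the latter follows exactly as in the correlated case by composing the unitary map $(f_n)_{n \in \N_0} \mapsto \sum_n \frac{(1-p)^n}{n!} \mathcal{M}^{p, \alpha}_n f_n$ from the extended anyon Fock space to $L^2(\rho_{p, \alpha})$ with $\bigoplus_n P_t^{[n]}$, whose self-adjointness is guaranteed by ingredient (b).
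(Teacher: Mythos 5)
Your overall architecture matches the paper's exactly: you correctly identify the three inputs (consistency via strong consistency, reversibility of $\lambda_n$ from Brockington--Warren, and the explicit formula for $\mathcal{M}^{p,\alpha}_n$) and feed them into the abstract framework of Section~\ref{section: strategy}. Your reduction of the core intertwining identity is a valid repackaging of what the paper does: where the paper proves the one-particle transfer identity (the Pascal analogue of \eqref{equation: condition poisson}) and iterates it $n-k$ times, you perform the same computation for all $n-k$ coordinates at once by pairing against a test function $g_k$ and using $\lambda_n = \lambda_k \otimes \kappa_{n,k}$ from \eqref{equation: lambda n equals kappa lambda k} together with reversibility at both levels $k$ and $n$. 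This is the same mechanism, just organized differently, and it works provided one is careful to symmetrize $g_k \otimes \one$ before invoking Proposition~\ref{proposition: sticky n-particle reversible}, which is only stated for symmetric test functions (the permutation symmetry of $P_t^{[n]}$ from strong consistency handles this, as you note).

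There is however a genuine gap in your extension step from $\mathcal{C}_n$ to $L^2(\lambda_n)$. You claim the contraction estimate for $F \mapsto \E_\zeta[F(\eta_t)]$ on $L^2(\rho_{p,\alpha})$ follows "by composing the unitary map \ldots with $\bigoplus_n P_t^{[n]}$." This is circular: the identification $\E_\zeta[F(\eta_t)] = \mathfrak{U}\bigl(\bigoplus_n P_t^{[n]}\bigr)\mathfrak{U}^{-1} F(\zeta)$ on all of $L^2(\rho_{p,\alpha})$ is precisely what one is trying to prove, and it is only established on the dense subspace $\mathcal{C}_n$ at that point. Knowing that a map defined pointwise (for bounded $F$) agrees on a dense subspace with a bounded operator does not by itself tell you that the pointwise definition is $L^2$-continuous, so you cannot conclude that the extension agrees with $\E_\zeta[F(\eta_t)]$ for general square-integrable $F$. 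The paper closes this gap by proving, independently of the intertwining, that $\rho_{p,\alpha}$ is \emph{invariant} for $(\eta_t)_{t \geq 0}$; this is done by matching factorial moment measures using consistency and then invoking unique solvability of the Pascal moment problem (via \cite[Proposition 4.12]{LastPenroseLectures}, after verifying the growth bound $\bigl(\tfrac{p}{1-p}\bigr)^n \lambda_n(B^n) \leq \bigl(2D\tfrac{p}{1-p}\bigr)^n n!$). Invariance plus Jensen's inequality then yields $\|\E_\zeta[F(\eta_t)]\|_{L^2(\P)} \leq \|F\|_{L^2(\rho_{p,\alpha})}$ directly, with no circularity. Your proposal omits the invariance/moment-problem step entirely and needs it to be inserted for the density argument to go through.
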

Similar to Theorem~\ref{Theorem: correlated Brownian motions}, the proof is a direct consequence of consistency and reversibility for the $n$-particle evolutions.
\begin{proposition}
    \label{proposition: sticky n-particle reversible}
    For unlabeled uniform sticky Brownian motions $(\eta_t)_{t \geq 0}$ with stickiness $\theta$, the push-forward measure of $\lambda_n$ under the mapping $(x_1, \ldots, x_n) \to \delta_{x_1} + \ldots + \delta_{x_n}$ is reversible.  
\end{proposition}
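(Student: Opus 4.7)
The plan is to reduce the statement to a reversibility property of the labeled $n$-particle semigroup $P_t^{[n]}$ on $\R^n$, and then transfer it via push-forward. Let $\iota_n : \R^n \to \N_n$, $(x_1,\ldots,x_n) \mapsto \sum_{k=1}^n \delta_{x_k}$, so that the measure appearing in the proposition is $\hat{\lambda}_n := (\iota_n)_* \lambda_n$. The central claim I would first establish is the \emph{labeled} reversibility
\begin{align*}
    \int (P_t^{[n]} f_n)\, g_n\, \dd \lambda_n = \int f_n\, (P_t^{[n]} g_n)\, \dd \lambda_n \qquad \text{for all } f_n, g_n \in \mathcal{C}_n,\ t \geq 0.
\end{align*}

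For this labeled reversibility, I would rely on the explicit $n$-particle transition density for uniform sticky Brownian motions obtained via the Bethe ansatz in \cite{brockingtonbethe}. The structure $\lambda_n = \sum_{\sigma \in \Sigma_n}\bigl(\prod_{A \in \sigma}(\abs{A}-1)!\bigr)\alpha_\sigma$ decomposes $\R^n$ according to the coincidence pattern: for each set partition $\sigma$, the summand corresponds to configurations where the particles in each block of $\sigma$ have coalesced, and the combinatorial factor $\prod_{A \in \sigma}(\abs{A}-1)!$ matches the natural Radon--Nikodym weight of the sticky singular measure on the associated diagonal (with $\abs{A}$ particles glued). One then reads off the symmetry directly from the explicit transition density. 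An alternative, more hands-on route is to verify the symmetry at the generator level: $L^{[n]}$ acts as $\frac{1}{2}\Delta$ in the interior with sticky boundary conditions of rate $\theta(i{:}j) = \frac{\theta}{2}\frac{(i-1)!(j-1)!}{(i+j-2)!}$ on each coincidence hyperplane, and one checks self-adjointness of $L^{[n]}$ on a core of smooth functions satisfying those boundary conditions.

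For the transfer, note that $\lambda_n$ is invariant under permutations of coordinates (each $\alpha_\sigma$ is symmetric), and $P_t^{[n]}$ is permutation-equivariant by the Howitt--Warren characterization; hence the unlabeled semigroup satisfies $\E_{\iota_n(x)}[F(\eta_t)] = P_t^{[n]}(F \circ \iota_n)(x)$ for every bounded measurable $F : \N_n \to \R$, by the same unlabeling argument as in Lemma~\ref{lemma: correlated Brownian to unlabeled}. For bounded measurable $F, G$ on $\N_n$,
\begin{align*}
    \int F(\mu)\, \E_\mu[G(\eta_t)]\, \dd \hat{\lambda}_n(\mu)
    &= \int (F \circ \iota_n)(x)\, P_t^{[n]}(G \circ \iota_n)(x)\, \dd \lambda_n(x) \\
    &= \int P_t^{[n]}(F \circ \iota_n)(x)\, (G \circ \iota_n)(x)\, \dd \lambda_n(x) \\
    &= \int \E_\mu[F(\eta_t)]\, G(\mu)\, \dd \hat{\lambda}_n(\mu),
\end{align*}
where the middle equality is the labeled reversibility applied to the (symmetric, bounded) functions $F \circ \iota_n$ and $G \circ \iota_n$.

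The main obstacle is the labeled reversibility on $\R^n$ itself. The combinatorial form of $\lambda_n$ makes the answer natural, but a rigorous verification must handle all coincidence diagonals simultaneously, and this is precisely the content of the Bethe-ansatz solution of \cite{brockingtonbethe} (or, equivalently, of the generator-level symmetry calculation with sticky boundary conditions). Once the labeled reversibility is in hand, the push-forward step reduces to the bookkeeping above.
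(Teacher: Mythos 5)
Your proposal is directionally aligned with the paper's proof — both reduce the unlabeled statement to a labeled one on $\R^n$ and both cite Brockington--Warren \cite{brockingtonbethe} — but it has a genuine gap precisely at the step you flag as the main obstacle. You write that the labeled reversibility with respect to $\lambda_n$ ``is precisely the content of the Bethe-ansatz solution of \cite{brockingtonbethe}.'' That is not quite so. What Brockington and Warren prove (their Theorem 4.17) is reversibility of a measure $m_\theta^{(n)}$ on the \emph{ordered} state space $\{x_1 \geq \cdots \geq x_n\}$, built as a sum over \emph{ordered} partitions $\Pi_n$ with weights $\theta^{|\pi|-n}\prod_{A\in\pi}\frac{1}{|A|}$. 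That measure is not $\lambda_n$, and the identification of the two is the real content of the proposition.

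Concretely, the paper must (a) pass from ordered to unordered reversibility — this uses the ordering map $\varphi$ and strong consistency to show $\E_x[g_n(Z_t)] = \E_{\varphi(x)}[g_n(Y_t)]$ for symmetric $g_n$ — and (b) establish the identity $\widetilde{m}_\theta^{(n)} = \frac{1}{\theta^n n!}\lambda_n$, where $\widetilde{m}_\theta^{(n)}$ is the symmetrization of the Brockington--Warren measure. Step (b) is a nontrivial combinatorial computation: one expands $\lambda^{\otimes k}$ into a sum over orderings, reindexes the sum over ordered partitions $\Pi_n$ as a sum over set partitions $\Sigma_n$, and checks that the factor $\prod_{A\in\pi}\frac{1}{|A|}$ from $m_\theta^{(n)}$ recombines with the multiplicities to give $\prod_{A\in\sigma}(|A|-1)!$ from the definition of $\lambda_n$ via the identity
\begin{align*}
    \abs{\{B_1,\ldots,B_k\}\in\Sigma_n : \{\abs{B_1},\ldots,\abs{B_k}\}=\{l_1,\ldots,l_k\}} = \frac{n!}{k!\, l_1!\cdots l_k!}\abs{\{(a_1,\ldots,a_k)\in\Pi_n : \{a_1,\ldots,a_k\}=\{l_1,\ldots,l_k\}\}}.
\end{align*}
Your proposal gestures at this (``the combinatorial factor matches the natural Radon--Nikodym weight'') but does not carry it out, and the heuristic as stated — that $\prod_{A\in\sigma}(|A|-1)!$ is the weight of the singular measure on the diagonal — does not obviously match $\prod_{A\in\pi}\frac{1}{|A|}$ without the reindexing. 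Your alternative generator-level route (self-adjointness of $L^{[n]}$ with sticky boundary conditions) would be an independent argument, but it is also left as an unexecuted plan and is not what the paper does. The push-forward transfer step in your write-up is correct and matches the paper's argument.
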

To prove Proposition~\ref{proposition: sticky n-particle reversible}, we use \cite[Theorem 4.17]{brockingtonbethe}, which provides a reversible measure for $n$ ordered uniform sticky Brownian motions.

The following corollary is a novel result providing a family of reversible measures for a system of infinitely many sticky Brownian motions. The reversibility of $\rho_{p, \alpha}$, the distribution of the Pascal process, can be shown using the same arguments as in the proof of Corollary~\ref{corollary: correlated Brownian motions}. In this context, reversibility is defined in the analogue way as in \eqref{equation: definition reversible}.
\begin{corollary}
    \label{corollary: sticky Brownian motions}
    Let $\theta > 0$. Then, for each $0 < p < 1$, the distribution of the Pascal process with parameters $p$ and $\alpha = \theta \lambda$ is a reversible measure for $(\eta_t)_{t \geq 0}$, which is an infinite system of unlabeled uniform sticky Brownian motions with stickiness $\theta$. 
\end{corollary}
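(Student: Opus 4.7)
The plan is to mirror the proof of Corollary~\ref{corollary: correlated Brownian motions}, replacing the Poisson/Wiener--Itô chaos with the Pascal/Meixner chaos. The orthogonality relation \eqref{equation: orthogonality infinite dimensional Meixner} together with the chaos decomposition for $L^2(\rho_{p, \alpha})$ stated immediately after it yields a unitary operator
\begin{align*}
    \mathfrak{U} : \mathfrak{F}_{p, \alpha} := \bigoplus_{n=0}^\infty \frac{p^n}{n!} L^2_{\mathrm{sym}}(\lambda_n) \to L^2(\rho_{p, \alpha}), \qquad (f_n)_{n \in \N_0} \mapsto \sum_{n=0}^\infty \frac{(1-p)^n}{n!} \mathcal{M}^{p, \alpha}_n f_n.
\end{align*}
For each $t \geq 0$, I would define $P_t : \mathfrak{F}_{p, \alpha} \to \mathfrak{F}_{p, \alpha}$ as the direct sum $(f_n)_{n \in \N_0} \mapsto (P_t^{[n]} f_n)_{n \in \N_0}$ with $P_t^{[0]} f_0 := f_0$. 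Proposition~\ref{proposition: sticky n-particle reversible}, transported from the unlabeled space $\mathbf{N}_n$ to symmetric functions on $\R^n$ via the push-forward under $(x_1, \ldots, x_n) \mapsto \delta_{x_1} + \ldots + \delta_{x_n}$, says that $P_t^{[n]}$ preserves $L^2_{\mathrm{sym}}(\lambda_n)$ and is self-adjoint and contractive there. Hence $P_t$ is a well-defined, bounded, self-adjoint operator on $\mathfrak{F}_{p, \alpha}$, and therefore so is the conjugation $\mathfrak{U} P_t \mathfrak{U}^{-1}$ on $L^2(\rho_{p, \alpha})$.

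To finish, it suffices to identify $\mathfrak{U} P_t \mathfrak{U}^{-1}$ with the transition operator $F \mapsto \E_\zeta[F(\eta_t)]$. Theorem~\ref{Theorem: sticky Brownian motions} gives exactly this identification when $F = \mathcal{M}^{p, \alpha}_n f_n$, and therefore by linearity on the dense subset of $L^2(\rho_{p, \alpha})$ consisting of finite linear combinations of Meixner chaos elements. To extend the identity to all $F \in L^2(\rho_{p, \alpha})$, one needs that $F \mapsto \E_\zeta[F(\eta_t)]$ is a contraction on $L^2(\rho_{p, \alpha})$. This follows from the analogue of estimate \eqref{equation: inequality E zeta} in the Pascal setting, which will appear in the proof of Theorem~\ref{Theorem: sticky Brownian motions}: Jensen's inequality applied chaos-wise together with the reversibility of $\lambda_n$ for each finite subconfiguration yields the required bound. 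Once this is in place, reversibility of $\rho_{p, \alpha}$ in the sense of \eqref{equation: definition reversible} is equivalent to the self-adjointness of $F \mapsto \E_\zeta[F(\eta_t)]$ on $L^2(\rho_{p, \alpha})$, which we have just established.

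The main obstacle is the contraction bound for the infinite-particle transition operator, since it is not immediate from the definition that $F \mapsto \E_\zeta[F(\eta_t)]$ lands in $L^2(\rho_{p, \alpha})$ at all. I expect the bound to be handled exactly as in the Poisson case, by decomposing $F$ along its Meixner chaos components and exploiting \eqref{equation: orthogonality infinite dimensional Meixner} together with the chaos-wise reversibility; the algebraic identification and the approximation argument are then formal consequences. Everything else, in particular the transfer of reversibility from $\lambda_n$ to its push-forward on $\mathbf{N}_n$, is routine because the $n$-particle semigroup is exchangeable.
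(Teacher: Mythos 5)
Your proposal is correct and follows essentially the paper's own route: the paper's proof is literally \qq{same arguments as in Corollary~\ref{corollary: correlated Brownian motions}}, and you have transported that argument faithfully to the Pascal/Meixner chaos, with the right orthogonality weights $p^n/n!$ on the Fock space and the right normalization $(1-p)^n/n!$ in the series, so that $\mathfrak{U}$ really is unitary by \eqref{equation: orthogonality infinite dimensional Meixner}. One small clarification on the contraction bound, which you flag as the main obstacle: in the paper the estimate \eqref{equation: inequality E zeta} (and its Pascal analogue in the proof of Theorem~\ref{Theorem: abstract theorem}~(\ref{item: pascal})) is not obtained \qq{chaos-wise}. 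Instead, one first shows that $\rho_{p,\alpha}$ is \emph{invariant} for $(\eta_t)_{t\ge 0}$ by matching all factorial moment measures and invoking uniqueness of the moment problem (with the Pascal moment problem handled by the growth estimate on $\bigl(\frac{p}{1-p}\bigr)^n\lambda_n(B^n)$), and then the contraction is a single application of Jensen's inequality for the full Markov operator with respect to the invariant measure. So your intuition that it comes from reversibility of $\lambda_n$ plus Jensen is in the right spirit, but the clean route goes through invariance of the infinite-particle measure rather than a degree-by-degree bound; with that understood, the rest of your argument is exactly what the paper does.
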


\section{Strategy for the Proof}
\label{section: strategy}

We prove the intertwining relations for correlated Brownian motions and sticky Brownian motions (Theorem~\ref{Theorem: correlated Brownian motions} and Theorem~\ref{Theorem: sticky Brownian motions}) by using the same techniques. First, in Section~\ref{section: strategy consistency}, we adapt the notation of consistency from e.g., \cite{carinci2021consistent} and \cite{intertwiningConsistent} to infinite particle systems. We then establish that the two models, correlated Brownian motions and sticky Brownian motions, are consistent, based only on the fact that the family of $n$-particle dynamics is a strongly consistent family according to Remark~\ref{remark: strongly consistent family}.

Next, in Section~\ref{section: abstract theorem}, we formulate a general theorem for both the Pascal and the Poisson case: if the push-forward of the factorial moment measures of the Poisson (or Pascal) process under the map $\iota_n : \R^n \to \mathbf{N}$, $(x_1, \ldots, x_n) \mapsto \delta_{x_1} + \ldots + \delta_{x_n}$ is reversible and the Markov processes are consistent, then we obtain the intertwining relations. The proof uses the explicit representation of Wiener-Itô integrals \eqref{equation: orthogonal polynomial: poisson case} and of infinite-dimensional Meixner polynomials \eqref{equation: infinite-dimensional Meixner polynomials}. 

Thus, we only need to verify the reversible measures for the finite systems. For correlated Brownian motions, this follows directly from the definition, while for sticky Brownian motions, we use \cite{brockingtonbethe}.

\subsection{Consistency}
\label{section: strategy consistency}

In \cite{carinci2021consistent} and \cite{intertwiningConsistent}, \emph{consistency} refers to the property that the removal of a particle uniformly at random commutes with the time-evolution of a process. This property can be formulated for particle systems with unlabeled particles. Specifically, if 
\begin{align}
    \label{equation: old definition consistency}
    \E_{\mu}\sbra{ \int F(\eta_t - \delta_x) \eta_t(\dd x)} = \int \E_{\mu - \delta_x} \sbra{F(\eta_t)} \mu(\dd x), \qquad \mu \in \Nfinite
\end{align}
holds for all measurable $F : \Nfinite \to [0, \infty)$, then the system is called consistent. In \eqref{equation: old definition consistency}, the left-hand side first evolves the system and then removes a particle uniformly at random. Conversely, the right-hand side of the equation first removes a particle uniformly at random from the initial configuration and then evolves the process from the resulting state.

In this article, we deliberately define consistency differently since \eqref{equation: old definition consistency} is not useful for extending to infinitely many particles. Equation~\eqref{equation: old definition consistency} lacks a connection between the dynamics of infinitely many particles and finitely many particles. More precisely, if one particle is removed from an infinite configuration, the total number of particles in the configuration remains infinite, whereas removing a particle from a finite configuration results in a finite number of remaining particles. However, our objective is to obtain intertwining relations that reduce the infinite number of particles to a finite number. According to \cite[Theorem 3.5, Remark 3.6.]{intertwiningConsistent}, our new definition below is a natural extension of the conventional definition of consistency, which coincides with the established notation of consistency of finite particle systems.

Let $(\Omega$, $\mathcal{F}$, $(\eta_t)_{t \geq 0}$, $(\P_\mu)_{\mu \in \mathbf{N}})$ be a Markov family such that $\eta_t$ is proper for each $t \geq 0$.

\begin{definition}
    A stochastic process $(\eta_t)_{t \geq 0}$ is called \emph{consistent} if 
    \begin{align}
        \label{Equation: New Definition Consistency}
        \notag
        &\E_{\mu} \sbra{ \int F(\delta_{x_1} + \ldots + \delta_{x_n}) \eta_t^{(n)}(\dd(x_1, \ldots, x_n))} \\
        &\hspace{4em} = \int \E_{\delta_{x_1} + \ldots + \delta_{x_n}} \sbra{F(\eta_t)} \mu^{(n)}(\dd(x_1, \ldots, x_n)), \qquad \mu \in \mathbf{N}
    \end{align}
    holds for all $t \geq 0$, measurable $F: \mathbf{N}_{<\infty} \to [0, \infty)$ and $n \in \N$. 
\end{definition}
We remark that, in general, integrability is not guaranteed on either the left-hand or the right-hand side in \eqref{Equation: New Definition Consistency}. In the case of non-integrability, we interpret the equation as $\infty = \infty$.

Interpreting \eqref{Equation: New Definition Consistency}, the left-hand side describes $n$ particles chosen uniformly from the evolved state of the process starting at $\mu$. The right-hand side, on the other hand, describes $n$ particles chosen uniformly from the initial state $\mu$ and then evolved under the process. 

\begin{remark}
    We only define consistency for processes in which each $\eta_t$, $t \geq 0$, is proper. This assumption is motivated by a technical subtlety: in general, the mapping $\mu \mapsto \int F(\delta_{x_1} + \ldots + \delta_{x_n}) \mu^{(n)}(\dd(x_1, \ldots, x_n))$ for $n \geq 2$, even for measurable $F$, is not measurable. 
    By requiring each $\eta_t$ to be proper, we ensure the measurability of $\omega \mapsto \int F(\delta_{x_1} + \ldots + \delta_{x_n}) \eta_t(\omega)^{(n)}(\dd(x_1, \ldots, x_n))$.
\end{remark}
Our definition can also be interpreted as self-intertwining property (see \cite[Equation~(3.6)]{intertwiningConsistent}) using the \emph{$K$-transform} introduced by Lenard~\cite{LenardI}, \cite{LenardII}. 
See also \cite{KunaHydrodynamicLimits} for $K$-transform intertwining relations in the context of \emph{free Kawasaki dynamics}.

We say that a process $(\eta_t)_{t \geq 0}$ is \emph{conservative} if it conserves the number of particles, meaning that $\eta_t(\R) = \mu(\R)$ holds $\P_\mu$-almost surely for all $\mu \in \mathbf{N}$ and $t \geq 0$.

\begin{proposition}
    \label{proposition: sticky / corellated brownian motion consistent, conservative}
    Both the unlabeled correlated Brownian motions with pairwise correlation $a$ and the unlabeled sticky Brownian motions with stickiness $\theta$ are consistent and conservative.
\end{proposition}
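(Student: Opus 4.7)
The idea is that conservation is immediate from the labeled constructions, while consistency is essentially a repackaging of the strong consistency property (the Le Jan--Raimond compatibility noted in Remark~\ref{remark: strongly consistent family}) in terms of factorial measures. For conservation: in both models, $\eta_t$ is constructed by Lemma~\ref{lemma: correlated Brownian to unlabeled} (and its analogue for sticky Brownian motions) as $\eta_t = \sum_{k=1}^{N} \delta_{X_{k,t}}$, where $N = \mu(\R) \in \N_0 \cup \set{\infty}$. Since each $X_{k,t}$ is a well-defined real-valued random variable, $\eta_t(\R) = N = \mu(\R)$ holds $\P_\mu$-almost surely for all $t \geq 0$, which is exactly the conservation statement.

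For consistency, fix $n \in \N$, $t \geq 0$, a measurable $F: \Nfinite \to [0,\infty)$, and $\mu = \sum_{k=1}^N \delta_{x_k} \in \mathbf{N}$. Plugging the representation $\eta_t = \sum_{k=1}^N \delta_{X_{k,t}}$ into the definition \eqref{equation: factorial measure equals sum} of the factorial measure, the left-hand side of \eqref{Equation: New Definition Consistency} becomes
\begin{align*}
    \E_\mu\!\left[\sum_{\substack{i_1,\dots,i_n=1 \\ \text{pairwise different}}}^N F\bigl(\delta_{X_{i_1,t}} + \dots + \delta_{X_{i_n,t}}\bigr)\right]
    = \sum_{\substack{i_1,\dots,i_n=1 \\ \text{pairwise different}}}^N \E_\mu\!\bigl[\widetilde{F}(X_{i_1,t},\dots,X_{i_n,t})\bigr],
\end{align*}
where $\widetilde{F}(y_1,\dots,y_n) := F(\delta_{y_1} + \dots + \delta_{y_n})$ and the interchange of sum and expectation is justified by Tonelli's theorem (non-negativity).

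The next step is to invoke strong consistency. By the last part of Remark~\ref{remark: strongly consistent family}, the joint distribution of $(X_{i_1,t},\dots,X_{i_n,t})$ under $\P_\mu$ equals $P_t^{[n]}((x_{i_1},\dots,x_{i_n}),\,\sd)$, so each summand equals $(P_t^{[n]}\widetilde{F})(x_{i_1},\dots,x_{i_n})$. Applying \eqref{equation: factorial measure equals sum} once more, but now to $\mu^{(n)}$, the total sum becomes
\begin{align*}
    \int (P_t^{[n]}\widetilde{F})(x_1,\dots,x_n)\,\mu^{(n)}(\dd(x_1,\dots,x_n)).
\end{align*}
On the other hand, starting the unlabeled process at $\delta_{x_1} + \dots + \delta_{x_n}$ gives $\E_{\delta_{x_1}+\dots+\delta_{x_n}}[F(\eta_t)] = (P_t^{[n]}\widetilde{F})(x_1,\dots,x_n)$ (again by the construction of $\eta_t$ together with strong consistency to remove the labeling), which matches the right-hand side of \eqref{Equation: New Definition Consistency}.

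\textbf{Expected obstacles.} The substantive subtlety is purely bookkeeping: when $N = \infty$ we must justify the termwise manipulations. Tonelli handles this because $F \geq 0$, so the infinite sum commutes with $\E_\mu$ and with integration against $\mu^{(n)}$. A second small point is measurability of $\mu \mapsto \int \E_{\delta_{x_1}+\dots+\delta_{x_n}}[F(\eta_t)]\,\mu^{(n)}(\dd(x_1,\dots,x_n))$, which is ensured by the measurability assumption \eqref{equation: measurability iota} and the fact that $\eta_t$ is proper so that factorial moment integrals of $F$ are themselves measurable in the configuration. No additional model-specific input is needed beyond strong consistency and the continuous-path construction.
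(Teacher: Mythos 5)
Your proof is correct and follows essentially the same route as the paper: write out both sides of \eqref{Equation: New Definition Consistency} via the explicit sum representation \eqref{equation: factorial measure equals sum} of the factorial measure, interchange sum and expectation, and invoke strong consistency (Remark~\ref{remark: strongly consistent family}) to identify each summand $\E_\mu[\widetilde F(X_{i_1,t},\ldots,X_{i_n,t})]$ with $\E_{\delta_{x_{i_1}}+\cdots+\delta_{x_{i_n}}}[F(\eta_t)]$. The extra remarks on Tonelli for the termwise manipulation when $N=\infty$ and on measurability via the properness assumption are sound and simply make explicit what the paper leaves implicit.
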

The proofs of both Lemma~\ref{lemma: correlated Brownian to unlabeled} and Proposition~\ref{proposition: sticky / corellated brownian motion consistent, conservative} rely solely on the strong consistency (picked up in Remark~\ref{remark: strongly consistent family}) of the family of underlying $n$-particle evolutions. This demonstrates that from any strongly consistent family, a consistent, unlabeled process can be constructed. Therefore, strong consistency is aptly named, as it is indeed a stronger property compared to consistency. For models of finitely many configurations, see also \cite[Section 3.1]{intertwiningConsistent}.

A direct consequence of the intertwining relation \eqref{Equation: New Definition Consistency}, which we take as the definition of consistency, is that $\pi_\lambda$, the distribution of the Poisson process with intensity measure $\lambda$, is an invariant measure for an infinite system of unlabeled correlated Brownian motions with pairwise correlation $a$. 
Indeed, we can use the fact that the moment problem for the Poisson process is uniquely solvable (as shown e.g. in \cite[Proposition 4.12]{LastPenroseLectures}). Therefore, it is enough to check that the factorial moment measures of a proper Poisson process $\zeta$, given by $\lambda^{\otimes n}$, and $\eta_t$ starting at $\zeta$ coincide. Since the push-forward measure of $\lambda^{\otimes n}$ under $\iota_n : (x_1, \ldots, x_n) \mapsto \delta_{x_1} + \ldots + \delta_{x_n}$ is invariant for $(\eta_t)_{t \geq 0}$, see Proposition~\ref{Proposition: correlated n-particle reversible}, we use consistency to obtain
\begin{align*}
    \int f_n \dx \lambda^{\otimes n} = \int P_t^{[n]} f_n \dx \lambda^{\otimes n} = \E \sbra{\int P_t^{[n]} f_n \dx \zeta^{(n)} } = \E \sbra{ \E_\zeta \sbra{\int f_n \dx \eta_t^{(n)}} }
\end{align*}
for measurable $f_n : E^n \to [0, \infty)$ and $t \geq 0$ which implies that $\pi_\lambda$ is indeed invariant for $(\eta_t)_{t \geq 0}$.

We emphasize that the argument for proving invariance of $\pi_\lambda$ is a general principle:
Let each $n$-th factorial moment measure of an infinite point process $\zeta$ be invariant for the $n$-particle dynamics of a consistent particle system. Assume that the factorial moment measures uniquely characterize the distribution of $\zeta$. Then, the distribution of $\zeta$ is invariant for the infinite dynamics.

Thus, using Proposition~\ref{proposition: sticky n-particle reversible}, we obtain that for each $0 < p < 1$, the distribution of the Pascal process with parameters $p$ and $\alpha = \theta \lambda$ is an invariant measure for an infinite system of unlabeled uniform sticky Brownian motions with stickiness $\theta$. 
Here, the unique solvability of the moment problem follows by the criterion presented in \cite[Proposition 4.12]{LastPenroseLectures}: Fix a bounded, measurable set $B \subset \R$. Since 
$\frac{1}{2^n n!} \alpha(B)^{(n)} \to 0$ 
as $n \to \infty$, there exists a $D \geq 1$ such that $\frac{1}{2^n n!} \alpha(B)^{(n)} \leq D$ for all $n \in \N$. Therefore, we estimate the factorial moment measure as follows:
\begin{align*}
    \bra{\frac{p}{1-p}}^n \lambda_n(B^n) &= \bra{\frac{p}{1-p}}^n \alpha(B)^{(n)} \leq \bra{2D \frac{p}{1-p}}^n n!.
\end{align*}

While the invariance follows easily by consistency, reversibility proves to be more challenging. In the following section, we conquer this issue by specifically exploiting the orthogonality of the intertwining relation in terms of infinite-dimensional orthogonal polynomials.

\subsection{Main Result: a Broader Perspective}
\label{section: abstract theorem}

The definition of consistency enables us to state the following theorem, which reveals the abstract framework after Theorem~\ref{Theorem: correlated Brownian motions} and Theorem~\ref{Theorem: sticky Brownian motions}. Let $(\Omega$, $\mathcal{F}$, $(\eta_t)_{t \geq 0}$, $(\P_\mu)_{\mu \in \mathbf{N}})$ be a Markov family (describing the evolution of infinitely many particles) such that $\eta_t$ is proper for each $t \geq 0$. Recall the mapping $\iota_n : \R^n \to \mathbf{N}$, $(x_1, \ldots, x_n) \mapsto \delta_{x_1} + \ldots + \delta_{x_n}$.
As a reminder, the set of configurations consisting of exactly $n \in \N_0$ particles is denoted by $\mathbf{N}_n := \set{\mu \in \mathbf{N} : \mu(\R) = n}$. 
Note that each symmetric function $f_n : \R^n \to \R$ can be identified with a function $F : \mathbf{N}_n \to \R$ through the relationship $f_n = F \circ \iota_n$. 
Suppose $(\eta_t)_{t \geq 0}$ is a conservative process. Then, for $n \in \N$, the $n$-particle semigroup $P_t^{[n]}$ can be recovered as follows:
\begin{align*}
    P_t^{[n]} f_n(x) := \E_{\iota_n(x)} \sbra{F(\eta_t)}, 
\end{align*}
where $x \in \R^n$, and $f_n : \R^n \to \R$ is a bounded (or non-negative), symmetric and measurable function. 
Define the operator $P_t^{[0]}$ as the identity operator on $\R$.
\begin{theorem}
    \label{Theorem: abstract theorem}
    Assume that $(\eta_t)_{t \geq 0}$ is consistent and conservative. 
    \begin{enumerate}[\normalfont(i)]
        \item \label{item: poisson} 
        Let $\zeta$ be a proper Poisson process with intensity measure $\lambda$. 
        Suppose that the push-forward measure of the Lebesgue measure on $\R^n$ under the mapping $\iota_n$ is reversible for $(\eta_t)_{t \geq 0}$ for each $n \in \N$. Then, the intertwining relation $\E_\zeta \sbra{I_n f_n(\eta_t)} = I_n P_t^{[n]} f_n(\zeta)$ holds almost surely and for all $t \geq 0$ and $f_n \in L^2_\mathrm{sym}(\lambda^{\otimes n})$, $n \in \N_0$. 
        \item \label{item: pascal} 
        Let $\zeta$ be a proper Pascal process with parameters $0 < p < 1$ and $\alpha$. 
        Suppose that the push-forward measure of $\lambda_n$ defined as \eqref{equation: definition lambda n} under the mapping $\iota_n$ is reversible for $(\eta_t)_{t \geq 0}$ for each $n \in \N$. Then, the intertwining relation $\E_\zeta \sbra{\mathcal{M}^{p, \alpha}_n f_n(\eta_t)} = \mathcal{M}^{p, \alpha}_n P_t^{[n]} f_n(\zeta)$ holds almost surely for all $t \geq 0$ and $f_n \in L^2_\mathrm{sym}(\lambda_n)$, $n \in \N_0$. 
    \end{enumerate}
\end{theorem}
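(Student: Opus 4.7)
The plan is to apply the explicit formulas \eqref{equation: orthogonal polynomial: poisson case} and \eqref{equation: infinite-dimensional Meixner polynomials} in a unified way. Write $\nu_n$ for $\lambda^{\otimes n}$ in case \eqref{item: poisson} and for $\lambda_n$ in case \eqref{item: pascal}, and let $\kappa^\star_{n,k}$ denote $\lambda^{\otimes(n-k)}$ or $\kappa_{n,k}$ respectively, so that $\nu_n = \nu_k \otimes \kappa^\star_{n,k}$ (cf.\ \eqref{equation: lambda n equals kappa lambda k}). Introduce the marginalization operator
\begin{align*}
    \Pi_k f_n(x_1, \ldots, x_k) := \int f_n(x_1, \ldots, x_n) \, \kappa^\star_{n,k}((x_1, \ldots, x_k), \dd(x_{k+1}, \ldots, x_n)),
\end{align*}
which, for symmetric $f_n \in \mathcal{C}_n$, produces a symmetric function of its $k$ arguments (in case \eqref{item: pascal} this uses that $\kappa_{n,k}((x_1, \ldots, x_k), \sd)$ is symmetric in $(x_1, \ldots, x_k)$, as is clear from \eqref{equation: definition kappa}). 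In both cases the polynomial is then $\sum_{k=0}^n c_{n,k} \int (\Pi_k \widetilde{f_n}) \dx \mu^{(k)}$ for suitable constants $c_{n,k}$. Substituting $\mu = \eta_t$, exchanging $\E_\zeta$ with the finite sum, and applying consistency \eqref{Equation: New Definition Consistency} to each $k$-th summand (with conservativity identifying the $k$-particle evolution with $P_t^{[k]}$) gives, almost surely,
\begin{align*}
    \E_\zeta \sbra{I_n f_n(\eta_t)} = \sum_{k=0}^n c_{n,k} \int P_t^{[k]}(\Pi_k \widetilde{f_n}) \dx \zeta^{(k)},
\end{align*}
and the analogous expression for $\mathcal{M}^{p, \alpha}_n$. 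Since $\widetilde{P_t^{[n]} f_n} = P_t^{[n]} \widetilde{f_n}$ by permutation invariance of the dynamics, matching with the target $\sum_{k=0}^n c_{n,k} \int \Pi_k (P_t^{[n]} \widetilde{f_n}) \dx \zeta^{(k)}$ reduces the theorem to the commutation
\begin{align}
    \label{equation: proof proposal commutation}
    P_t^{[k]} \Pi_k f_n = \Pi_k P_t^{[n]} f_n \quad \nu_k\text{-a.e.}
\end{align}
for all symmetric $f_n \in \mathcal{C}_n$ and $0 \leq k \leq n$; indeed, the $k$-th factorial moment measure of $\zeta$ is absolutely continuous with respect to $\nu_k$ (in case \eqref{item: pascal} this is Proposition~\ref{proposition: factorial moment measure Pascal process}).

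The main obstacle is \eqref{equation: proof proposal commutation}, and this is the only place where both hypotheses enter jointly. I would test both sides against an arbitrary symmetric $u \in \mathcal{C}_k$ and introduce the fully symmetric function
\begin{align*}
    H_u(x_1, \ldots, x_n) := \sum_{\substack{S \subset \set{1, \ldots, n} \\ \abs{S} = k}} u((x_i)_{i \in S}) \in \mathcal{C}_n.
\end{align*}
Consistency \eqref{Equation: New Definition Consistency} applied to the finite initial configurations $\iota_n(x)$ gives $P_t^{[n]} H_u = H_{P_t^{[k]} u}$. The symmetry of $f_n$ and of $\nu_n$ combined with the factorization $\nu_n = \nu_k \otimes \kappa^\star_{n,k}$ yields the adjoint-type identity $\int f_n H_u \dx \nu_n = \binom{n}{k} \int (\Pi_k f_n)\, u \dx \nu_k$. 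Using the assumed reversibility of $\nu_n$ for $P_t^{[n]}$, of $\nu_k$ for $P_t^{[k]}$, and the consistency relation above,
\begin{align*}
    \binom{n}{k} \int \Pi_k (P_t^{[n]} f_n) \, u \dx \nu_k
    &= \int (P_t^{[n]} f_n) H_u \dx \nu_n
    = \int f_n \, H_{P_t^{[k]} u} \dx \nu_n \\
    &= \binom{n}{k} \int (\Pi_k f_n)(P_t^{[k]} u) \dx \nu_k
    = \binom{n}{k} \int P_t^{[k]}(\Pi_k f_n)\, u \dx \nu_k.
\end{align*}
Since symmetric elements of $\mathcal{C}_k$ are dense in $L^2_\mathrm{sym}(\nu_k)$, \eqref{equation: proof proposal commutation} follows.

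Finally, the identity must be upgraded from $\mathcal{C}_n$ to all of $L^2_\mathrm{sym}(\nu_n)$. On the right-hand side, $f_n \mapsto I_n P_t^{[n]} f_n(\zeta)$, respectively $\mathcal{M}^{p, \alpha}_n P_t^{[n]} f_n(\zeta)$, is $L^2$-continuous by the isometries \eqref{equation: orthogonality relation poisson} and \eqref{equation: orthogonality infinite dimensional Meixner} together with the $L^2(\nu_n)$-contractivity of $P_t^{[n]}$, which follows from reversibility, the Markov property, and interpolation with the $L^\infty$ contractivity of Markov operators. On the left-hand side, the intertwining identity for compactly supported $f_n$ yields the bound $\E \sbra{\E_\zeta\sbra{I_n f_n(\eta_t)}^2} \leq n! \norm{f_n}^2_{L^2(\nu_n)}$ in case \eqref{item: poisson}, and the analogous bound in case \eqref{item: pascal}, so $f_n \mapsto \E_\zeta\sbra{I_n f_n(\eta_t)}$ extends continuously from $\mathcal{C}_n$ to $L^2_\mathrm{sym}(\nu_n)$. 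A standard approximation argument using the density of $\mathcal{C}_n$ in $L^2_\mathrm{sym}(\nu_n)$ then concludes.
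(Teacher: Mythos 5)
Your proposal is correct and follows essentially the same route as the paper. The commutation identity \eqref{equation: proof proposal commutation} you isolate is precisely the $(n-k)$-fold iteration of the paper's single-particle-addition equation~\eqref{equation: condition poisson}, and your proof of it via testing against symmetric $u \in \mathcal{C}_k$ and introducing $H_u = \binom{n}{k}\,\widetilde{u \otimes \one}$ replicates the paper's chain of steps (symmetrization, reversibility for the $(l+1)$-particle dynamics, consistency in the form~\eqref{equation: consistency in terms of Pt}, reversibility for the $l$-particle dynamics) in the exact same order. One minor deviation is in the extension step: the paper first shows that $\pi_\lambda$ (respectively $\rho_{p,\alpha}$) is invariant via the uniqueness of the moment problem, which makes $\E_\zeta[F(\eta_t)]$ unambiguously well-defined for $F \in L^2(\pi_\lambda)$ before passing to limits, whereas you deduce the relevant $L^2$ bound for the left-hand side from the intertwining identity on $\mathcal{C}_n$ itself and then extend by density. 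This is a valid shortcut, but one should then be explicit that $\E_\zeta[I_n f_n(\eta_t)]$ for general $f_n \in L^2_{\mathrm{sym}}(\nu_n)$ is \emph{defined} as the $L^2(\Omega)$-limit of $\E_\zeta[I_n f_n^k(\eta_t)]$ along an approximating sequence $f_n^k \in \mathcal{C}_n$, since without the invariance of $\pi_\lambda$ (resp.\ $\rho_{p,\alpha}$) it is not a priori clear that the expected value of an $L^2$-equivalence class evaluated at $\eta_t$ is meaningful; the paper settles this by establishing invariance and the contraction~\eqref{equation: inequality E zeta}.
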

We present a proof for Theorem~\ref{Theorem: abstract theorem} in Section~\ref{section: intertwining relations} below.
\begin{proof}[Proof of Theorem~\ref{Theorem: correlated Brownian motions} and Theorem~\ref{Theorem: sticky Brownian motions}]
    Proposition~\ref{proposition: sticky / corellated brownian motion consistent, conservative} shows that both the correlated and sticky Brownian motions, in their unlabeled version, are consistent and conservative. By using Proposition~\ref{Proposition: correlated n-particle reversible} (or Proposition~\ref{proposition: sticky n-particle reversible}), we apply Theorem~\ref{Theorem: abstract theorem} to obtain the intertwining relations for symmetric functions for both processes.

    If $f_n$ is a non-symmetric, square-integrable function, we apply Theorem~\ref{Theorem: abstract theorem} to its symmetrization $\widetilde{f_n}$, and then use 
    \begin{align*}
        P_t^{[n]}{\widetilde{f_n}} = \widetilde{P_t^{[n]} f_n},
    \end{align*}
    which follows from strong consistency, as defined in \eqref{equation: strong consistency}, together with $I_n \widetilde{f_n} = I_n f_n$ (or $\mathcal{M}^{p, \alpha}_n \widetilde{f_n} = \mathcal{M}^{p, \alpha}_n f_n$). 
\end{proof}

The proof of Corollary~\ref{corollary: correlated Brownian motions} (or Corollary~\ref{corollary: sticky Brownian motions}) also works in the abstract framework. Recall that $\pi_\lambda$ denotes the distribution of the Poisson process with intensity measure $\lambda$, and $\rho_{p, \alpha}$ denotes the distribution of the Pascal process with parameters $p$ and $\alpha$. 

\begin{corollary}
    \label{corollary: abstract}
    Under the assumptions of Theorem~\ref{Theorem: abstract theorem}, if we assume the condition stated in~{\normalfont(\ref{item: poisson})}, then $\pi_\lambda$ is reversible for $(\eta_t)_{t \geq 0}$, whereas if we assume the condition stated in~{\normalfont(\ref{item: pascal})}, then $\rho_{p, \alpha}$ is reversible for $(\eta_t)_{t \geq 0}$. 
\end{corollary}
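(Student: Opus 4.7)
The plan is to generalize the Fock-space/unitary-transformation argument already used to prove Corollary~\ref{corollary: correlated Brownian motions} so that it simultaneously covers both the Poisson case~(\ref{item: poisson}) and the Pascal case~(\ref{item: pascal}). In either case the starting point is a unitary operator $\mathfrak{U}$ from an appropriate Fock space onto $L^2$ of the underlying reference distribution: in case~(\ref{item: poisson}) one takes $\mathfrak{F} = \bigoplus_{n=0}^\infty \frac{1}{n!} L^2_{\mathrm{sym}}(\lambda^{\otimes n})$ and $\mathfrak{U}(f_n)_{n \in \N_0} = \sum_{n=0}^\infty \frac{1}{n!} I_n f_n$; in case~(\ref{item: pascal}) one takes the extended anyon Fock space with weights $p^n/n!$ and $\mathfrak{U}(f_n)_{n \in \N_0} = \sum_{n=0}^\infty \frac{(1-p)^n}{n!} \mathcal{M}^{p,\alpha}_n f_n$. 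That $\mathfrak{U}$ is unitary is precisely the chaos decomposition together with the orthogonality relations \eqref{equation: orthogonality relation poisson} and \eqref{equation: orthogonality infinite dimensional Meixner}.

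Next, on $\mathfrak{F}$ I would define the diagonal operator $P_t\colon (f_n)_{n\in\N_0} \mapsto (P_t^{[n]} f_n)_{n\in\N_0}$, with the convention $P_t^{[0]} = \mathrm{id}$. The reversibility assumption, together with the Markov property of $P_t^{[n]}$, guarantees that each $P_t^{[n]}$ is a contraction and self-adjoint on the appropriate symmetric $L^2$-space; summing in $n$ shows that $P_t$ is well-defined, bounded, and self-adjoint on $\mathfrak{F}$. Consequently the conjugated operator $\mathfrak{U} P_t \mathfrak{U}^{-1}$ is a bounded self-adjoint operator on $L^2(\pi_\lambda)$ (respectively $L^2(\rho_{p,\alpha})$).

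The key identification is then
\begin{align*}
\mathfrak{U} P_t \mathfrak{U}^{-1} F(\zeta) = \E_\zeta \sbra{F(\eta_t)},
\end{align*}
which holds $\P$-almost surely for every $F$ in $L^2$ of the respective distribution. For $F$ given by a single chaos $I_n f_n$ (respectively $\mathcal{M}^{p,\alpha}_n f_n$) this is exactly the intertwining relation provided by Theorem~\ref{Theorem: abstract theorem}, and by linearity it extends to finite chaos sums, which are dense. The main technical obstacle, and the step I would carry out in detail, is to justify the extension to all $F \in L^2$; this requires showing that the map $F \mapsto \E_\zeta[F(\eta_t)]$ is a contraction from $L^2(\pi_\lambda)$ (respectively $L^2(\rho_{p,\alpha})$) to $L^2(\Omega, \mathcal{F}, \P)$. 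One obtains this by applying Jensen's inequality to the outer conditional expectation, orthogonally decomposing $F$ into chaos components, and using in each chaos the fact that $P_t^{[n]}$ is an $L^2$-contraction with respect to the push-forward of $\lambda^{\otimes n}$ (respectively $\lambda_n$); this is precisely the contraction bound referenced as \eqref{equation: inequality E zeta} in the proof of Corollary~\ref{corollary: correlated Brownian motions} and which will be established in Section~\ref{section: intertwining relations} along with Theorem~\ref{Theorem: abstract theorem}.

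Finally, self-adjointness of $F \mapsto \E_\zeta[F(\eta_t)]$ on $L^2$ of the reference measure translates directly into the reversibility relation
\begin{align*}
\E \sbra{ \E_\zeta \sbra{F(\eta_t)} G(\zeta) } = \E \sbra{ F(\zeta) \E_\zeta \sbra{G(\eta_t)} }
\end{align*}
for bounded measurable $F, G\colon \mathbf{N} \to \R$, which is exactly the definition of reversibility of $\pi_\lambda$ (respectively $\rho_{p,\alpha}$) for $(\eta_t)_{t \geq 0}$; measurability of $\zeta \mapsto \E_\zeta[F(\eta_t)]$ is guaranteed by \eqref{equation: measurability iota} together with the fact that $\zeta$ is proper. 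Boundedness of $F$ and $G$ reduces the identity to the $L^2$-setting through truncation and dominated convergence, completing the proof.
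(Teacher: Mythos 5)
Your proposal is correct and takes essentially the same route as the paper: the paper proves Corollary~\ref{corollary: correlated Brownian motions} via the unitary $\mathfrak{U}$ from the Fock space $\bigoplus_n \frac{1}{n!} L^2_{\mathrm{sym}}(\lambda^{\otimes n})$, the diagonal operator $P_t$, the identification $\mathfrak{U} P_t \mathfrak{U}^{-1}F(\zeta) = \E_\zeta[F(\eta_t)]$, and the contraction bound \eqref{equation: inequality E zeta}, and then remarks that the same argument works in the abstract framework and in the Pascal case; you have written out precisely that argument, correctly substituting the extended anyon Fock space with weights $p^n/n!$ and the expansion $\sum_n \frac{(1-p)^n}{n!}\mathcal{M}^{p,\alpha}_n f_n$ for case~(\ref{item: pascal}). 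The only point to be careful about is the justification of the contraction bound: the paper establishes \eqref{equation: inequality E zeta} by first proving invariance of $\pi_\lambda$ (via the moment problem) and then applying Jensen, not by decomposing $F$ into chaoses and using contractivity of each $P_t^{[n]}$ (which would presuppose the identification you are trying to extend); your phrasing mixes the two, and the Jensen-from-invariance route is the one that avoids circularity.
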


\begin{remark}
    Theorem~\ref{Theorem: abstract theorem} and Corollary~\ref{corollary: abstract} are formulated here for Markov processes with particles on the real line. However, both results hold true in a much more general setting. For instance, if $(E, \mathcal{E})$ is a \emph{Borel space} (see \cite{LastPenroseLectures}), instead of the Lebesgue measure, an arbitrary $\sigma$-finite measure can be chosen for both the Poisson and Pascal case. 
\end{remark}

\begin{remark}
The condition of reversibility in Theorem~\ref{Theorem: abstract theorem} can be relaxed by demanding only the condition \eqref{equation: condition poisson} below. Moreover, if this equation holds pointwise, rather than almost everywhere, we obtain the intertwining relation \eqref{equation: intertwining Poisson} pointwise as well. However, to ensure the existence of integrals, it may be necessary to reduce the set of configurations. We can consider this idea for systems of \emph{independent particles} on a Borel space: if the one-particle dynamics admits only an invariant, $\sigma$-finite measure, then \eqref{equation: condition poisson} follows pointwise.

By being more rigorous, we obtain the following statement. Consider a Markov semigroup $(p_t)_{t \geq 0}$ on a Borel space $(E, \mathcal{E})$ with an invariant $\sigma$-finite measure $\lambda$. Let $(\Omega$, $\mathcal{F}$, $(\eta_t)_{t \geq 0}$, $(\P_\mu)_{\mu \in \mathbf{N}})$ be the Markov family of unlabeled particles, where each particle evolves independently according to the single particle dynamics $(p_t)_{t \geq 0}$. This family can be constructed using the arguments presented in Lemma~\ref{lemma: correlated Brownian to unlabeled}.

Let $t \geq 0$ be fixed, and assume that $\mu$ is locally finite, meaning that $\mu(A) < \infty$ for all $A \in \mathcal{E}$ with $\lambda(A) < \infty$, as well as the measure $\mu p_t$, defined by $\mu p_t(B) := \int p_t \one_B \dx \mu$. We also consider a measurable and bounded function $f_n : E^n \to \R$, $n \in \N$, such that $\lambda^{\otimes n}[f_n \neq 0] < \infty$.
Under these conditions, we have the following intertwining relation:
\begin{align*}
    \E_\mu \sbra{I_n f_n(\eta_t)} = I_n P_t^{[n]} f_n(\mu)
\end{align*}
where $I_n$ denotes the multiple Wiener-Itô integral of degree $n$, constructed with the invariant measure $\lambda$.
\end{remark}

\section{Proofs}
\label{section: proofs}
\subsection{Unlabeled Dynamics and Consistency}
\label{section: proof consistent}

First, we prove Lemma~\ref{lemma: correlated Brownian to unlabeled} and Proposition~\ref{proposition: sticky / corellated brownian motion consistent, conservative}. In the following proof, we use a construction of the unlabeled dynamics that combines two principles. Firstly, we combine the dynamics of different numbers of particles, choosing which dynamics to follow based on the initial configuration's particle number. Secondly, we map the labeled notation $x = (x_k)_{k=1}^n \in \R^n$ to $\iota_n(x) = \sum_{k=1}^n \delta_{x_k}$. These arguments are standard: the first one follows easily, and the second can be interpreted as a Markov mapping theorem. However, since this construction is essential for the intertwining relations, we present it in detail for the reader's benefit.

\begin{proof}[Proof of Lemma~\ref{lemma: correlated Brownian to unlabeled}]
    For each $n \in \N \cup \set{\infty}$, let $(\Omega^n, \mathcal{F}^n, (Z_t^n)_{t \geq 0}$, $(\P_x^n)_{x \in \R^n})$ be the Markov family that describes the dynamics of $n$ particles. The family consists of a measurable space $(\Omega^n, \mathcal{F}^n)$, measurable mappings $Z_t^n : \Omega \to \R^n$, $t \geq 0$, and probability measures $\P_x^n$ on $(\Omega^n, \mathcal{F}^n)$ with corresponding expected values denoted by $\E_x^n$. The Markov property is satisfied with respect to the natural filtration $\mathcal{F}_t^n := \sigma(Z_s^n : 0 \leq s \leq t)$. 
    \begin{itemize}
        \item We define $\Omega := \set{0} \cup \bigcup_{n \in \N \cup \set{\infty}} \set{n} \times \Omega_n$, and equip it with the $\sigma$-algebra $\mathcal{F}$ generated by the sets $\set{0}$ and $\set{n} \times A_n$, $A_n \in \mathcal{F}^n$, $n \in \N_0 \cup \set{\infty}$
        \item Put $\eta_t(n, \omega^n) := \iota_n(Z_t^n(\omega^n))$ for $n \in \N \cup \set{\infty}$, $\omega^n \in \Omega^n$ and $\eta_t(0) := 0$. 
        \item For every $\mu \in \mathbf{N}$, we choose a fixed $z(\mu) \in \R^n$ such that $\iota_n(z(\mu)) = \mu$, where $n = \mu(E)$. We then define $\P_\mu$ to be the push-forward measure of $\P_{z(\mu)}^n$ under the mapping $\omega^n \mapsto (n, \omega^n)$.
    \end{itemize}
    The distribution $\P_\mu$ relies on how the components of $z(\mu)$ are permuted, but according to the strong consistency property, see Remark~\ref{remark: strongly consistent family}, the distribution of $(\eta_t)_{t \geq 0}$ under $\P_\mu$ remains unchanged regardless of the choice of permutation. In particular, this distribution is equal to the distribution of $(\iota_n(Z_t^n))_{t \geq 0}$ under $\P_x$, for all $x \in \R^n$ that satisfy $\iota_n(x) = \mu$.

    We now show that for $\mathcal{F}_t = \sigma(\eta_s: 0 \leq s \leq t)$ the Markov property
    \begin{align*}
        \P_\mu[\eta_{t+s} \in A \mid \mathcal{F}_s] = \P_{\eta_s} \sbra{ \eta_t \in A} \qquad \P_\mu \text{-almost surely for all } \mu \in \mathbf{N}, A \in \mathcal{N}, t, s \geq 0
    \end{align*}
    holds by applying the Markov property of the process $(Z_t^n)_{t \geq 0}$. 
    Indeed, fix $A \in \mathcal{N}$, $s, t \geq 0$, $\mu = \sum_{k=1}^n \delta_{x_k} \in \mathbf{N}$, $x = (x_k)_{k=1}^n$, $n \in \N \cup \set{\infty}$ and $B \in \mathcal{F}_s$. We use the abbreviations $\iota = \iota_n$, $Z_t = Z_t^n$, $\E_x = \E_x^n$. Then, using the definition of $\mathcal{F}_s$, we find a measurable set $C \subset \set{h : h: [0, t] \to \mathbf{N}}$ such that $\one_{B} = \one_C((\eta_u)_{0 \leq u \leq s})$. Therefore, 
    \begin{align*}
        \E_\mu \sbra{\one_A(\eta_{t+s}) \one_{B}} &= \E_x \sbra{\one_A(\iota(Z_{t+s}) ) \one_C \bra{\bra{\iota(Z_u)}_{0 \leq u \leq s}}} \\
        &= \E_x \sbra{\E_{Z_s} \sbra{ \one_A(\iota(Z_t))} \one_C\bra{\bra{\iota(Z_u)}_{0 \leq u \leq s}}} = \E_\mu \sbra{\E_{\eta_s} \sbra{ \one_A(\eta_t)} \one_{B}}
    \end{align*}
    holds since $\one_C \bra{\bra{\iota_n(Z_u)}_{0 \leq u \leq s}}$ is $\mathcal{F}_s^n$-measurable. The remaining properties can be easily obtained.
\end{proof}
The proof of Proposition~\ref{proposition: sticky / corellated brownian motion consistent, conservative} relies solely on strong consistency (see Remark~\ref{remark: strongly consistent family}) and follows by linearity and by the definition of the factorial measure.

\begin{proof}[Proof of Proposition~\ref{proposition: sticky / corellated brownian motion consistent, conservative}]
By definition, $(\eta_t)_{t \geq 0}$ is conservative, so only consistency requires a proof.
Let $n \in \N \cup \set{\infty}$ be fixed and consider $\mu = \sum_{k=1}^n \delta_{x_k} \in \mathbf{N}$, where $x = (x_k)_{k=1}^n$. Let $(\Omega^n, \mathcal{F}^n, (Z_t^n)_{t \geq 0}$, $(\P_x^n)_{x \in \R^n})$, $Z_t^n = (Z_{k,t}^n)_{k =1}^n, t \geq 0$ denote the Markov family associated with the $n$-particle dynamics as described in the proof of Lemma~\ref{lemma: correlated Brownian to unlabeled}. By applying \eqref{equation: factorial measure equals sum} twice, and using strong consistency and linearity, we obtain consistency:
\begin{align*} 
    \E_\mu \sbra{\int F(\delta_{y_1} + \ldots + \delta_{y_l}) \eta_t^{(l)} (\dd(y_1, \ldots, y_l))} &= \sum_{\substack{i_1, \ldots, i_l = 1 \\ \text{pairwise different}}}^n \E_x^n \sbra{ F(\delta_{Z_{i_1,t}^n} + \ldots, \delta_{Z_{i_l, t}^n}) }  \\
    = \sum_{\substack{i_1, \ldots, i_l = 1 \\ \text{pairwise different}}}^n \E_{\delta_{x_{i_1}} + \ldots + \delta_{x_{i_l}}} \sbra{F(\eta_t)} &= \int \E_{\delta_{y_1} + \ldots + \delta_{y_l}} \sbra{F(\eta_t)} \mu^{(l)}(\dd (y_1, \ldots, y_l)). \qedhere 
\end{align*}
\end{proof}

\subsection{Intertwining Relations}
\label{section: intertwining relations}
To begin, we present a proof for part~{\normalfont(\ref{item: poisson})} of Theorem~\ref{Theorem: abstract theorem}. Recall the mapping $\iota_n : \R^n \to \mathbf{N}$, $(x_1, \ldots, x_n) \mapsto \delta_{x_1} + \ldots + \delta_{x_n}$. Note that 
\begin{align}
    \label{equation: tensor and factorial measure} 
    \bra{\widetilde{u \otimes \one}}(x_1, \ldots, x_{n+1}) 
    &= \frac{1}{(n+1)!} \int u \dx \bra{\delta_{x_1} + \ldots + \delta_{x_{n+1}}}^{(n)}
\end{align}
for $x_1, \ldots, x_{n+1} \in \R$ and measurable $u: \R^n \to \R$ using the notation $\widetilde{u \otimes \one}$ for the symmetrization of the function $(u \otimes \one)(x_1, \ldots, x_{n+1}) := u(x_1, \ldots, x_n)$. 
With this notation, the consistency property for a finite number of particles can be expressed in terms of $P_t^{[n]}$ by
\begin{align}
    \label{equation: consistency in terms of Pt}
    \notag
    P_t^{[n+1]} \bra{\widetilde{\varphi \otimes \one}}(x_1, \ldots, x_{n+1}) &= \frac{1}{(n+1)!} \int P_t^{[n]} \varphi \dx \bra{\delta_{x_1} + \ldots + \delta_{x_{n+1}}}^{(n)} \\
    &=  \bra{\widetilde{(P_t^{[n]} \varphi) \otimes \one}}(x_1, \ldots, x_{n+1})
\end{align}
for bounded, symmetric $\varphi : \R^n \to \R$. 
To obtain this expression, we use \eqref{equation: tensor and factorial measure} for both $\varphi$ and $P_t^{[n]} \varphi$, and apply the consistency condition \eqref{Equation: New Definition Consistency}. 

The crucial steps in the following proof are as follows: first, we establish the intertwining relation for functions in the smaller space $\mathcal{C}_n$ using the explicit formulas for the orthogonal polynomials \eqref{equation: orthogonal polynomial: poisson case} and \eqref{equation: infinite-dimensional Meixner polynomials}. Next, we extend this relation to all functions in $L^2(\lambda^{\otimes n})$ using an approximation argument. To accomplish this, we employ the moment problem to demonstrate that $\pi_\lambda$ is invariant.

\begin{proof}[Proof of Theorem~\ref{Theorem: abstract theorem}~{\normalfont(\ref{item: poisson})}]
    We claim the following equation: for all $t \geq 0$, $l \in \N_0$ and measurable $F: \mathbf{N}_{l+1} \to [0, \infty)$, 
    \begin{align}
        \label{equation: condition poisson}
        \int \E_{\delta_{z_1} + \ldots + \delta_{z_l} + \delta_y} \sbra{ F(\eta_t)}\lambda(\dd y) = \int \E_{\delta_{z_1} + \ldots + \delta_{z_l}} \sbra{F(\eta_t + \delta_y)} \lambda(\dd y)
    \end{align}
    holds for $z = (z_1, \ldots, z_l) \in \R^l$ $\lambda^{\otimes l}$-almost everywhere. The case where $l=0$ reads as follows: $\int \E_{\delta_y} \sbra{ F(\eta_t)} \lambda(\dd y) = \int F(\delta_y) \lambda(\dd y)$.

    To prove \eqref{equation: condition poisson}, we multiply both the right-hand side and the left-hand side of the equation by an arbitrary measurable function $\varphi : \R^l \to [0, \infty)$ and integrate with respect to $\lambda^{\otimes l}$. Since both the right-hand side and the left-hand side of \eqref{equation: condition poisson} are symmetric in $(z_1, \ldots, z_l)$, it is sufficient to integrate with symmetric functions $\varphi$. Let $t \geq 0$, $l \in \N$. Using reversibility, we obtain
    \begin{align}
        \label{equation: right side}
        &\int \varphi(x_1, \ldots, x_l) \int \E_{\delta_{x_1} + \ldots + \delta_{x_l} + \delta_y} \sbra{F(\eta_t)} \lambda(\dd y) \lambda^{\otimes l}(\dd (x_1, \ldots, x_l)) \notag \\
        &= \int \widetilde{\varphi \otimes \one}(x_1, \ldots, x_l, y) \E_{\delta_{x_1} + \ldots + \delta_{x_l} + \delta_y} \sbra{F(\eta_t)} \lambda^{\otimes (l+1)} (\dd (x_1, \ldots, x_l,y)) \notag \\
        &= \int P_t^{[l+1]} \bra{\widetilde{\varphi \otimes \one}}(x_1, \ldots, x_l,y) F(\delta_{x_1} + \ldots + \delta_{x_l} + \delta_y) \lambda^{\otimes (l+1)}(\dd (x_1, \ldots, x_l, y )).
    \end{align}
    Applying \eqref{equation: consistency in terms of Pt} and using reversibility once again, \eqref{equation: right side} can be transformed into 
    \begin{align*}
        &\int \bra{\widetilde{(P_t^{[l]} \varphi) \otimes \one}}(x_1, \ldots, x_l,y)  F(\delta_{x_1} + \ldots + \delta_{x_l} + \delta_y) \lambda^{\otimes (l+1)}(\dd (x_1, \ldots, x_l, y)) \\
        &\hspace{4em}= \int P_t^{[l]} \varphi (x_1, \ldots, x_l)  \int F(\delta_{x_1} + \ldots + \delta_{x_l} + \delta_y) \lambda(\dd y) \lambda^{\otimes l}(\dd (x_1, \ldots, x_l)) \\
        &\hspace{4em}= \int \varphi (x_1, \ldots, x_l)  \int \E_{\delta_{x_1} + \ldots + \delta_{x_l}} \sbra{F(\eta_t + \delta_y)} \lambda(\dd y) \lambda^{\otimes l}(\dd (x_1, \ldots, x_l)).
    \end{align*}
    which implies \eqref{equation: condition poisson}. 

    Let $\zeta$ be proper Poisson process with intensity measure $\lambda$. 
    Equation~\eqref{equation: condition poisson} enables us to prove the intertwining relation \eqref{equation: intertwining Poisson}. Let $f_n \in \mathcal{C}_n$ be symmetric and select $F : \mathbf{N}_n \to \R$ such that $f_n = F \circ \iota_n$. Consequently, by using \eqref{equation: orthogonal polynomial: poisson case} and consistency, we arrive at the following 
    \begin{align*}
        &\E_\zeta \sbra{I_n f_n(\eta_t)} \\
        &\hspace{4em}= \sum_{k=0}^n \binom{n}{k} (-1)^{n-k} \E_\zeta \bigg [\iint f_n(x_1, \ldots, x_n) \lambda^{\otimes (n-k)}(\dd(x_{k+1}, \ldots, x_n)) \eta_t^{(k)}(\dd(x_1, \ldots, x_k))  \bigg ] \\
        &\hspace{4em}= \sum_{k=0}^n \binom{n}{k} (-1)^{n-k} \iint \E_{\delta_{x_1} + \ldots + \delta_{x_k}} \sbra{F(\eta_t + \delta_{x_{k+1}} + \ldots + \delta_{x_n})} \\
        &\hspace{16em}  \lambda^{\otimes (n-k)}(\dd(x_{k+1}, \ldots, x_n)) \zeta^{(k)}(\dd (x_1, \ldots, x_k)).
    \end{align*}
    Using \eqref{equation: condition poisson} repeatedly for $n-k$, we obtain
    \begin{align*}
        &\int \E_{\delta_{x_1} + \ldots + \delta_{x_k}} \sbra{F(\eta_t + \delta_{x_{k+1}} + \ldots + \delta_{x_n}) } \lambda^{\otimes (n-k)}(\dd(x_{k+1}, \ldots, x_n)) \\
        &\hspace{4em}= \int \E_{\delta_{x_1} + \ldots + \delta_{x_n}} \sbra{F(\eta_t)} \lambda^{\otimes (n-k)}(\dd(y_{x+1}, \ldots, x_n)) \\
        &\hspace{4em}= \int P_t^{[n]} f_n(x_1, \ldots, x_n) \lambda^{\otimes (n-k)}(\dd(x_{k+1}, \ldots, x_n))
    \end{align*}
    for $\lambda^{\otimes k}$-almost all $(y_1, \ldots, y_k)$. Since $\lambda^{\otimes k}$ is the $k$-th factorial moment measure of the Poisson process with intensity measure $\lambda$, integrating with respect to the $k$-th factorial measure of $\zeta$ is well-defined, resulting in
    \begin{align*}
        &\E_\zeta \sbra{I_n f_n(\eta_t)} \\
        &\hspace{4em}= \sum_{k=0}^n \binom{n}{k} (-1)^{n-k} \iint P_t^{[n]} f_n(x_1, \ldots, x_n) \lambda^{\otimes (n-k)}(\dd(x_{k+1}, \ldots, x_n)) \zeta^{\otimes k}(\dd (x_1, \ldots, x_k)) \\
        &\hspace{4em}= I_n P_t^{[n]} f_n(\zeta)
    \end{align*}
    almost surely.

    As a next step, we extend \eqref{equation: intertwining Poisson} to include $f_n \in L^2_{\mathrm{sym}}(\lambda^{\otimes n})$.
    In Section~\ref{section: strategy consistency}, we observed that under the assumptions of Theorem~\ref{Theorem: abstract theorem}~{\normalfont(\ref{item: poisson})}, the measure $\pi_\lambda$ is invariant for $(\eta_t)_{t \geq 0}$.
    Thus, the expected value $\E_\zeta \sbra{F(\eta_t)}$ is well-defined for equivalence classes under $\pi_\lambda$. Moreover, we have 
    \begin{align}
        \label{equation: inequality E zeta}
        \norm{\E_\zeta \sbra{F(\eta_t)}}_{L^2} \leq \norm{F}_{L^2(\pi_\lambda)}, \qquad F \in L^2(\pi_\lambda)
    \end{align}
    where $L^2 := L^2(\Omega, \mathcal{F}, \P)$ and $(\Omega, \mathcal{F}, \P)$ denotes the probability space on which $\zeta$ is defined.
    There exists a sequence $f_n^k \in \mathcal{C}_n$, since $\mathcal{C}_n$ is dense in $L^2(\lambda^{\otimes n})$,  such that $f_n^{k} \to f_n$ as $k \to \infty$. Since $f_n$ is symmetric, we can choose each $f_n^k$ to be symmetric as well. 
    Using \eqref{equation: inequality E zeta} and the orthogonality relation \eqref{equation: orthogonality relation poisson}, we obtain
    \begin{align*}
        &\norm{\E_\zeta \sbra{I_n f_n(\eta_t)} - I_n P_t^{[n]} f_n(\zeta)}_{L^2} \\
        &\hspace{4em}\leq \norm{\E_\zeta \sbra{I_n f_n(\eta_t) - I_n f_n^k(\eta_t)}}_{L^2} +  \norm{I_n P_t^{[n]} f_n(\zeta) - I_n P_t^{[n]} f_n^k(\zeta)}_{L^2 } \\ 
        &\hspace{4em}\leq \sqrt{n} \norm{f_n - f_n^k}_{L^2(\lambda^{\otimes n})} + \sqrt{n} \norm{P_t^{[n]} f_n - P_t^{[n]} f_n^k}_{L^2(\lambda^{\otimes n})} \\ 
        &\hspace{4em}\leq 2 \sqrt{n} \norm{f_n -  f_n^k}_{L^2(\lambda^{\otimes n})} \to 0, \qquad k \to \infty. 
    \end{align*}
    In the third inequality, we exploit the property that $P_t^{[n]}$ is a contraction on $L^2_{\mathrm{sym}}(\lambda^{\otimes n})$, a direct consequence of the invariance of the push-forward measure of $\lambda^{\otimes n}$ under $\iota_n$. Therefore, $\E_\zeta \sbra{I_n f_n(\eta_t)} = I_n P_t^{[n]} f_n(\zeta)$ almost surely. 
\end{proof}

Recall $(a)^{(k)} := a (a+1) \cdots (a+k-1)$, $(a)^{(0)} := 1$, the \emph{rising factorial}, and $(a)_k := a (a-1) \cdots (a-k+1)$, $(a)_0 := 1$, the \emph{falling factorial}.

\begin{proof}[Proof of Theorem~\ref{Theorem: abstract theorem}~{\normalfont(\ref{item: pascal})}]
The proof is analogous to the one for the Poisson case up to minor changes: Equation~\eqref{equation: condition poisson} has an analogous form for the Pascal case, given by
\begin{align*}
    \int \E_{\delta_{z_1} + \ldots + \delta_{z_l} + \delta_y} \sbra{F(\eta_t)} (\delta_{z_1} + \ldots + \delta_{z_l} + \alpha)(\dd y)  = \E_{\delta_{z_1} + \ldots + \delta_{z_l}} \sbra{ \int F(\eta_t + \delta_y) \: (\eta_t + \alpha)(\dd y) }
\end{align*}
for $t \geq 0$, $l \in \N_0$, measurable $F: \mathbf{N}_{l+1} \to [0, \infty)$ and $z = (z_1, \ldots, z_l) \in \R^l$ $\lambda_l$-almost everywhere.

\end{proof}

\begin{proof}[Proof of Proposition~\ref{proposition: factorial moment measure Pascal process}]
    Let $\zeta$ be a Pascal process with parameters $p$ and $\alpha$ and fix $A = A_1^{d_1} \times \cdots \times A_N^{d_N}$ where $A_1, \ldots, A_N \subset \R$ are disjoint measurable sets and $d_1 + \ldots + d_N = n$. Firstly, note that 
    (see e.g. \cite[Equation~(3.4)]{intertwiningConsistent}) 
    \begin{align}
        \label{equation: indicator falling factorial}
        \int \one_A \dx \mu^{(n)} = (\mu(A_1))_{d_1} \cdots (\mu(A_N))_{d_N}.
    \end{align}
    Secondly, Lemma~\ref{lemma: Meixner 2} below implies
    \begin{align}
        \label{equation: indicator lambda n}
        \int \one_A \dx \lambda_n = (\alpha(A_1))^{(d_1)} \cdots (\alpha(A_N))^{(d_N)}.
    \end{align}
    By combining \eqref{equation: indicator falling factorial} and \eqref{equation: indicator lambda n}, and using the fact that the $k$-th factorial moment of the negative Binomial distribution with parameters $p$ and $a$ is $\bra{\frac{p}{1-p}} (a)^{(k)}$ we obtain $\E \sbra{\int \one_A \dx \mu^{(n)}} = \bra{\frac{p}{1-p}}^n \lambda_n(A)$. This equation can be extended to all measurable subsets of $\R^n$ using standard measure-theoretic arguments.
\end{proof}

\subsection{Infinite-Dimensional Meixner Polynomials: an Explicit Formula}
\label{section: explicit formula Meixner}

In this section, we prove Proposition~\ref{proposition: explicit formula infinite-dimensional Meixner}. Our strategy for the proof is as follows: on the one hand, as mentioned in Remark~\ref{remark: univariate meixner}, $\mathcal{M}^{p, \alpha}_n f_n$ defined by \eqref{equation: infinite-dimensional Meixner polynomials} for specific $f_n$ reduces to a product of univariate Meixner polynomials. We prove this property in Proposition~\ref{lemma: explicit meixner product formula} below, for which we state two preliminary lemmas. On the other hand, it is well-known (see \cite[Proposition 5.3]{intertwiningConsistent}, see also \cite[Lemma 3.1]{LYTVYNOV2003118}) that the infinite-dimensional orthogonal polynomials defined by the right-hand side of \eqref{equation: explicit formula infinite-dimensional Meixner} have the same product structure for this $f_n$. The equality then is obtained through standard measure-theoretical arguments.

Fix a partition of measurable sets $B_1, \ldots, B_N$ of $\R$, $n < m$, $z_1, \ldots, z_n \in \R$ and put $c_k := \bra{\delta_{z_1} + \ldots + \delta_{z_n}}(B_k)$ for $k \in \set{1, \ldots, N}$. 

\begin{lemma}
    \label{lemma: Meixner 1}
    Let $i_{n+1}, \ldots, i_{m} \in \set{1, \ldots, N}$ and put $e_k := \sum_{l=n+1}^m \one_{\set{i_l= k}}$ for $k \in \set{1, \ldots, N}$. Then, 
    \begin{align*}
         \int  \one_{B_{i_{n+1}} \times \ldots \times B_{i_{m}}}(y_{n+1}, \ldots, y_{m})  \: \kappa_{m,n} (z_1, \ldots, z_n, \dd( y_{n+1}, \ldots, y_{m})) = \prod_{k=1}^N \bra{\alpha(B_k) + c_k}^{(e_k)}
    \end{align*}
    holds true. 
\end{lemma}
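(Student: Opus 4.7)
The plan is to argue by induction on $m - n$, using the tensor-product structure
$\kappa_{m,n} = \kappa_{n+1,n} \otimes \kappa_{m,n+1}$ from \eqref{equation: definition kappa} to peel off one integration at a time, starting with the variable $y_{n+1}$. The base case $m = n + 1$ is a direct computation: the integral is $\int \one_{B_{i_{n+1}}}\dx (\alpha + \delta_{z_1} + \ldots + \delta_{z_n}) = \alpha(B_{i_{n+1}}) + c_{i_{n+1}}$, while on the right-hand side only the exponent $e_{i_{n+1}} = 1$ is nonzero, so the product reduces to the same expression.

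For the inductive step, set $j := i_{n+1}$ and use the tensor product structure to write the integral as an iterated integral: first integrate $y_{n+2}, \ldots, y_m$ against $\kappa_{m,n+1}((z_1, \ldots, z_n, y_{n+1}), \cdot)$ for fixed $y_{n+1}$, then integrate $y_{n+1}$ against $\kappa_{n+1,n}(z_1, \ldots, z_n, \cdot) = \alpha + \delta_{z_1} + \ldots + \delta_{z_n}$ over $B_j$. For the inner integral, by the inductive hypothesis applied to the indices $i_{n+2}, \ldots, i_m$ and the augmented configuration $(z_1, \ldots, z_n, y_{n+1})$, the result is $\prod_{k=1}^N (\alpha(B_k) + c_k')^{(e_k')}$ where $c_k' = c_k + \one_{B_k}(y_{n+1})$ and $e_k' = e_k - \one_{\{k = j\}}$. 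Crucially, since the $B_k$'s are disjoint and $y_{n+1} \in B_j$, only $c_j$ is incremented, so the inner integral equals $(\alpha(B_j) + c_j + 1)^{(e_j - 1)} \prod_{k \neq j} (\alpha(B_k) + c_k)^{(e_k)}$ — a constant in $y_{n+1} \in B_j$. The outer integration over $B_j$ contributes a factor $(\alpha + \sum_i \delta_{z_i})(B_j) = \alpha(B_j) + c_j$, and the identity $a \cdot (a+1)^{(e_j - 1)} = (a)^{(e_j)}$ collapses the $j$-th factor to $(\alpha(B_j) + c_j)^{(e_j)}$, yielding the claimed product.

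The main conceptual point — and the only real source of friction — is the bookkeeping for the accumulating atomic part of the kernel. The disjointness of $B_1, \ldots, B_N$ is what makes the whole argument clean: without it, integrating $y_{n+1} \in B_j$ could also perturb the count $c_k$ of another set, and the factors would no longer separate into rising factorials indexed by $k$.
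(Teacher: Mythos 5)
Your proof is correct. Both arguments proceed by induction on the number of integration variables via the tensor structure of $\kappa$, but in opposite directions. You peel off the first kernel, writing $\kappa_{m,n} = \kappa_{n+1,n} \otimes \kappa_{m,n+1}$ and applying the inductive hypothesis to the augmented base $(z_1,\ldots,z_n,y_{n+1})$; the paper peels off the last kernel, writing $\kappa_{m+1,n} = \kappa_{m,n} \otimes \kappa_{m+1,m}$ and holding the base $(z_1,\ldots,z_n)$ fixed throughout. The paper's direction is slightly leaner: the new factor is just the evaluation of $\kappa_{m+1,m}$ on $B_{i_{m+1}}$, namely $\alpha(B_{i_{m+1}}) + c_{i_{m+1}} + e_{i_{m+1}}$, which is manifestly constant on the support of the remaining indicator, and the rising factorial accumulates from below without any separate algebraic identity. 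Your direction instead requires two extra (correctly supplied) observations: that by disjointness of the $B_k$, only $c_j$ is perturbed when $y_{n+1}\in B_j$, so the inner product formula is constant in $y_{n+1}$; and the identity $a\cdot(a+1)^{(e_j-1)} = a^{(e_j)}$ to reassemble the $j$-th rising factorial after the outer integration contributes the front factor $\alpha(B_j)+c_j$.
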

Thereby, we put $\infty^{(k)} := \infty$ for $k \geq 1$ and $\infty^{(0)} := 0$.
\begin{proof}
    We prove the equation by induction over $m$. For $m=n+1$ the statement is a direct consequence of the definition of $\kappa_{n+1, n}$. Assume that the statement is true for some fixed $m > n$. Let
    \begin{align*}
        \sum_{l=n+1}^{m+1} \one_{\set{i_l = k}} = e_k + \one_{\set{k = i_{m+1}}}
    \end{align*}
    for an arbitrary $i_{m+1} \in \set{1, \ldots, N}$. Then, by \eqref{equation: definition kappa}, 
    \begin{align*}
        &\int  \one_{B_{i_{n+1}} \times \ldots B_{i_{m+1}}}(y_{n+1}, \ldots, y_{m+1}) \kappa_{m+1,n}(z_1, \ldots, z_n, \dd (y_{n+1}, \ldots, y_{m+1}))\\
        &\hspace{4em}= \int  \one_{B_{i_{n+1}} \times \ldots \times B_{i_{m}}}(y_{n+1}, \ldots, y_{m})  \kappa_{m+1, m}(z_1, \ldots, z_n, y_{n+1}, \ldots, y_{m}, B_{i_{m+1}}) \\
        &\hspace{21em} \kappa_{m,n}(z_1, \ldots, z_n, \dd (y_{n+1}, \ldots, y_{m})) \\
        &\hspace{4em}= (\alpha(B_{i_{m+1}}) + c_{i_{m+1}} + e_{i_{m+1}} ) \prod_{k=1}^N \bra{\alpha(B_k) + c_k}^{(e_k)} =\prod_{k=1}^N \bra{\alpha(B_k) + c_k}^{(e_k + \one_{\set{k = i_{m+1}}})}. \qedhere
    \end{align*}
\end{proof}

\begin{lemma}
\label{lemma: Meixner 2}
Let $d_1, \ldots, d_N \in \N_0$ with $d_1 + \ldots + d_N = m$. Then,
\begin{align}
    \label{equation: meixner 2}
    \notag
    &\int \widetilde{\one}_{B_1^{d_1} \times \cdots \times B_N^{d_N}} (z_1, \ldots, z_n, y_{n+1}, \ldots, y_{m}) \: \kappa_{m,n}(z_1, \ldots, z_n, \dd( y_{n+1}, \ldots, y_{m}))  \\
    &\hspace{4em}=  \frac{1}{(m)_n}  \prod_{k=1}^N  (d_k)_{c_k} \bra{\alpha(B_k) + c_k}^{(d_k - c_k)}
\end{align}
holds where $\widetilde{\one}_{B_1^{d_1} \times \cdots \times B_N^{d_N}}$ denotes the symmetrization of $\one_{B_1^{d_1} \times \cdots \times B_N^{d_N}}$. 
\end{lemma}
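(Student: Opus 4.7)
The plan is to expand the symmetrization explicitly, exchange sum and integral, and then apply Lemma~\ref{lemma: Meixner 1} to each resulting integral before collecting the combinatorial prefactor.

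First, I would rewrite the symmetrization in the form
\begin{align*}
\widetilde{\one}_{B_1^{d_1} \times \cdots \times B_N^{d_N}}(x_1, \ldots, x_m) = \frac{d_1! \cdots d_N!}{m!} \sum_{(I_1, \ldots, I_N)} \prod_{k=1}^N \prod_{j \in I_k} \one_{B_k}(x_j),
\end{align*}
where the sum ranges over ordered partitions $(I_1, \ldots, I_N)$ of $\set{1, \ldots, m}$ with $|I_k| = d_k$; the factor $d_1! \cdots d_N! / m!$ counts the number of permutations in $\mathfrak{S}_m$ producing the same ordered partition. This follows directly from \eqref{equation: definition symmetrization} by grouping the sum over $\mathfrak{S}_m$ according to which positions are sent into each block.

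Next, I would specialize to arguments $(z_1, \ldots, z_n, y_{n+1}, \ldots, y_m)$. Since $B_1, \ldots, B_N$ are pairwise disjoint, each $z_j$ lies in at most one $B_k$; writing $J_k := \set{j \in \set{1, \ldots, n}: z_j \in B_k}$, so that $|J_k| = c_k$, the product $\prod_{k} \prod_{j \in I_k \cap \set{1, \ldots, n}} \one_{B_k}(z_j)$ equals one if and only if $I_k \cap \set{1, \ldots, n} = J_k$ for every $k$. In particular this forces $d_k \geq c_k$; otherwise both sides of \eqref{equation: meixner 2} vanish, since $(d_k)_{c_k} = 0$. Under this constraint, the part $I_k \cap \set{n+1, \ldots, m}$ has size $d_k - c_k$ and may be chosen freely, yielding $\binom{m-n}{d_1 - c_1, \ldots, d_N - c_N}$ admissible ordered partitions. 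Each such partition produces an integrand on the $y$-variables of the form $\prod_{l=n+1}^m \one_{B_{k(l)}}(y_l)$ with $|\set{l : k(l) = k}| = d_k - c_k$.

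Applying Lemma~\ref{lemma: Meixner 1} shows that the integral of every such product against $\kappa_{m, n}(z_1, \ldots, z_n, \sd)$ equals $\prod_{k=1}^N (\alpha(B_k) + c_k)^{(d_k - c_k)}$, independently of the specific index function $k(\sd)$. Collecting everything and simplifying the combinatorial prefactor via
\begin{align*}
\frac{d_1! \cdots d_N!}{m!} \binom{m-n}{d_1 - c_1, \ldots, d_N - c_N} = \frac{(m-n)!}{m!} \prod_{k=1}^N \frac{d_k!}{(d_k - c_k)!} = \frac{1}{(m)_n} \prod_{k=1}^N (d_k)_{c_k}
\end{align*}
yields \eqref{equation: meixner 2}. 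The only bookkeeping subtlety I anticipate is the step that translates the disjointness of the $B_k$ into the rigid constraint $I_k \cap \set{1, \ldots, n} = J_k$ and correctly counts the admissible partitions; once this is in place, the remainder is a direct application of Lemma~\ref{lemma: Meixner 1} combined with a standard multinomial identity.
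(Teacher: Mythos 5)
Your proof is correct and follows essentially the same route as the paper: both decompose by how the $y$-variables $y_{n+1},\ldots,y_m$ are assigned to the blocks $B_k$ (you via ordered partitions $(I_1,\ldots,I_N)$, the paper via index functions $i_{n+1},\ldots,i_m$, which are in bijection under the constraint), apply Lemma~\ref{lemma: Meixner 1} to each term, and finish with the same multinomial identity. The handling of $d_k < c_k$ matches the paper's remark after the lemma.
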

In case there exists a $k$ for which $d_k < c_k$, $(d_k)_{c_k}$ becomes zero, leading to the right-hand side of \eqref{equation: meixner 2} being zero. Moreover, if there exists a $k$ for which $d_k > c_k$ and $\alpha(B_k) = \infty$, the right-hand side of \eqref{equation: meixner 2} is equal to infinity.

\begin{proof}
We decompose the the integral into 
\begin{align}
    \label{equation: decomposition integral}
    \notag
   &\int \widetilde{\one}_{B_1^{d_1} \times \cdots \times B_N^{d_N}}(z_1, \ldots, z_n, y_{n+1}, \ldots, y_{m}) \kappa_{m,n}(z_1, \ldots, z_n, \dd( y_{n+1}, \ldots, y_{m})) \\
   \notag
   &\hspace{2em}= \sum_{i_{n+1}, \ldots, i_{m} \in \set{1, \ldots, N}} \int \one_{B_{i_{n+1}} \times \ldots \times B_{i_{m}}}(y_{n+1}, \ldots, y_{m}) \widetilde{\one}_{B_1^{d_1} \times \cdots \times B_N^{d_N}}(z_1, \ldots, z_n, y_{n+1}, \ldots, y_{m}) \\
   &\hspace{16em} \kappa_{m,n}(z_1,\ldots,z_n, \dd( y_{n+1}, \ldots, y_{m})).
\end{align}
If $x_1, \ldots, x_m \in \R$ are given such that $(\delta_{x_1} + \ldots + \delta_{x_m})(B_k) = d_k$ for all $k \in \set{1, \ldots, N}$, then the symmetrization $\widetilde{\one}_{B_1^{d_1} \times \cdots \times B_N^{d_N}} (x_1, \ldots, x_m)$ is equal to $\frac{d_1! \cdots d_N!}{m!}$. Otherwise, it is equal to zero. Thus, $\widetilde{\one}_{B_1^{d_1} \times \cdots \times B_N^{d_N}}(z_1, \ldots, z_n, y_{n+1}, \ldots, y_{m}) > 0$ is equivalent to $\sum_{l=n+1}^m \one_{\set{i_l = k}} + c_k = d_k$. 
By applying Lemma~\ref{lemma: Meixner 1} to \eqref{equation: decomposition integral}, we obtain:
\begin{align*}
    &\frac{d_1! \cdots d_N!}{m!} \sum_{\substack{i_{n+1}, \ldots, i_{m} \in \set{1, \ldots, N} \\  \sum_{l=n+1}^m \one_{\set{i_l= k}} + c_k = d_k }} \int  \one_{B_{i_{n+1}} \times \ldots \times B_{i_{m}}}(y_{n+1}, \ldots, y_{m})  \kappa_{m,n}(z_1, \ldots, z_n, \dd( y_{n+1}, \ldots, y_{m})) \\
   &=  \frac{d_1! \cdots d_N!}{m!} \sum_{\substack{i_{n+1}, \ldots, i_{m} \in \set{1, \ldots, N} \\  \sum_{l=n+1}^m \one_{\set{i_l = k}} + c_k = d_k }} \prod_{k=1}^N \bra{\alpha(B_k) + c_k}^{(d_k - c_k)}. 
\end{align*}
Finally, using the identity
\begin{align*}
    \sum_{\substack{i_{n+1}, \ldots, i_{m} \in \set{1, \ldots, N} \\  \sum_{l=n+1}^m \one_{\set{i_l= k}} + c_k = d_k }} 1 = (m-n)!\prod_{k=1}^N \one_{\set{d_k \geq c_k}} \frac{1}{(d_k - c_k)!},
\end{align*}
we conclude the proof.
\end{proof}
Recall that the monic, univariate Meixner polynomials are given by \eqref{equation monic Meixner}.
\begin{lemma}
\label{lemma: explicit meixner product formula}
Let $d_1, \ldots, d_N \in \N_0$ with $d_1 + \ldots + d_N = m$ and $\mu \in \mathbf{N}$. Assume $\mu(B_k) < \infty$ and $0 < \alpha(B_k) < \infty$ for all $k$ with $d_k > 0$. Then, the equation
\begin{align*}
    \mathcal{M}^{p, \alpha}_m \one_{B_1^{d_1} \times \cdots \times B_N^{d_N}}(\mu) =  \prod_{k=1}^N \mathscr{M}^{p,\alpha(A_k)}_{d_k}\bra{\mu(B_k)}
\end{align*}
is satisfied. 
\begin{proof}
    Let $b_k := \mu(B_k) \in \N_0$. By applying Lemma~\ref{lemma: Meixner 1} and the definition of the factorial measure $\mu^{(n)}$, we obtain
    \begin{align*}
        &\iint \widetilde{\one}_{B_1^{d_1} \times \cdots \times B_N^{d_N}} (z_1, \ldots, z_n, y_{n+1}, \ldots, y_{m}) \: \kappa_{m,n}(z_1, \ldots, z_n, \dd (y_{n+1}, \ldots, y_{m})) \mu^{(n)}(\dd (z_1, \ldots, z_n)) \\
        &\hspace{4em}= \frac{n!}{(m)_n}  \sum_{\substack{c_1, \ldots, c_N \in \N_0 \\c_1 + \ldots + c_N = n}}  \prod_{k=1}^N \frac{(b_k)_{c_k}}{c_k!}   (d_k)_{c_k} \bra{\alpha(B_k) + c_k}^{(d_k - c_k)}. 
    \end{align*}
    It follows that
    \begin{align*}
        \mathcal{M}^{p, \alpha}_m \one_{B_1^{d_1} \times \cdots \times B_N^{d_N}}(\mu) &= \prod_{k=1}^N  \sum_{c_k = d_k}^\infty  \bra{1-\frac{1}{p}}^{c_k-d_k}  \frac{(b_k)_{c_k}}{c_k!}   (d_k)_{c_k} \bra{\alpha(B_k) + c_k}^{(d_k - c_k)} \\
		&= \prod_{k=1}^N   \mathscr{M}^{p, \alpha(B_k)}_{d_k}(\mu(B_k)). \qedhere
    \end{align*}
\end{proof}
\end{lemma}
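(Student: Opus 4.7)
The plan is to compute $\mathcal{M}^{p,\alpha}_m \one_{B_1^{d_1}\times\cdots\times B_N^{d_N}}(\mu)$ directly from its definition~\eqref{equation: infinite-dimensional Meixner polynomials}, apply Lemma~\ref{lemma: Meixner 2} to evaluate the inner integral against the kernel $\kappa_{m,k}$, and then evaluate the outer integral against $\mu^{(k)}$ by exploiting the disjointness of $B_1,\ldots,B_N$. If the bookkeeping is done correctly, the double sum should decouple into an $N$-fold product of independent sums, each of which matches the monic univariate Meixner polynomial from~\eqref{equation monic Meixner}.

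In detail, Lemma~\ref{lemma: Meixner 2} tells me that the inner integral equals $\tfrac{1}{(m)_k}\prod_{j=1}^N (d_j)_{c_j}\bra{\alpha(B_j)+c_j}^{(d_j-c_j)}$, where $c_j$ records how many of the outer integration variables $z_1,\ldots,z_k$ fall into $B_j$. Crucially, this depends on $(z_1,\ldots,z_k)$ only through the composition $(c_1,\ldots,c_N)$ with $\sum_j c_j = k$. I would then group configurations by their count pattern and apply \eqref{equation: indicator falling factorial} factor-by-factor on the disjoint sets: for a fixed composition there are $\binom{k}{c_1,\ldots,c_N}$ orderings, and each contributes $\prod_j (\mu(B_j))_{c_j}$ to the $\mu^{(k)}$-integral.

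Substituting both calculations back into \eqref{equation: infinite-dimensional Meixner polynomials}, the prefactor $\binom{m}{k}\,k!/(m)_k$ collapses to $1$. Since $\sum_j c_j = k$ and $\sum_j d_j = m$, the factor $\bra{1-\tfrac{1}{p}}^{k-m}$ splits multiplicatively as $\prod_j\bra{1-\tfrac{1}{p}}^{c_j-d_j}$, and the double sum over $k$ together with the compositions $(c_1,\ldots,c_N)$ collapses into an $N$-fold product of independent sums, one per index $j$. Because $(d_j)_{c_j}$ vanishes for $c_j > d_j$, each inner sum is effectively truncated at $c_j = d_j$. Using $(d_j)_{c_j}/c_j! = \binom{d_j}{c_j}$ and comparing with~\eqref{equation monic Meixner} identifies each factor as $\mathscr{M}^{p,\alpha(B_j)}_{d_j}(\mu(B_j))$, concluding the argument.

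The main obstacle is purely combinatorial: juggling the multinomial coefficient from the count decomposition, the binomial and rising/falling factorials in the Meixner formula, and the $\bra{1-1/p}$ prefactor, and verifying that they reassemble cleanly into the factorised product. No analytic estimate is needed; the hypotheses $\mu(B_k)<\infty$ and $0<\alpha(B_k)<\infty$ whenever $d_k>0$ ensure all appearing factorials are finite, so there is no integrability subtlety once the algebraic identity has been matched.
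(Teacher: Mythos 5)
Your proof is correct and follows essentially the same route as the paper: evaluate the inner $\kappa_{m,k}$-integral via Lemma~\ref{lemma: Meixner 2}, integrate against $\mu^{(k)}$ by grouping tuples according to their count pattern across the disjoint $B_j$ (yielding the multinomial factor times $\prod_j (\mu(B_j))_{c_j}$), observe that $\binom{m}{k}\,k!/(m)_k=1$ and that $(1-1/p)^{k-m}$ factors over $j$, and then decouple the double sum into the $N$-fold product of monic univariate Meixner polynomials via $(d_j)_{c_j}/c_j!=\binom{d_j}{c_j}$. This is precisely the paper's computation, only described in prose rather than displayed.
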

We define $\hat{\mathcal{M}}^{p, \alpha}_n f_n$ as the expression on the right-hand side of \eqref{equation: explicit formula infinite-dimensional Meixner}. 

\begin{proof}[Proof of Proposition~\ref{proposition: explicit formula infinite-dimensional Meixner}]
Fix $m \in \N$ and a bounded set $B \subset \R$. We consider the function 
\begin{align}
    \label{equation: special f m}
    f_m = \one_{B_1^{d_1} \times \cdots \times B_N^{d_N}}
\end{align}
where $B_1, \ldots, B_N$ are disjoint and measurable subsets of $B$, $d_1 + \ldots + d_N = m$, and $d_1, \ldots, d_N \in \N$. We claim that \eqref{equation: explicit formula infinite-dimensional Meixner} holds true for the function $f_m$. 

Indeed, if $0 < \alpha(B_k)$ for all $k$, then applying Lemma~\ref{lemma: explicit meixner product formula} and \cite[Proposition 5.3]{intertwiningConsistent} (see also \cite[Lemma 3.1]{LYTVYNOV2003118}) yields
\begin{align*}
    \hat{\mathcal{M}}^{p, \alpha}_m f_m(\mu) &=   \prod_{k=1}^N \mathscr{M}_{d_k}^{p, \alpha(A_k)}\bra{\mu(B_k)} = \mathcal{M}^{p, \alpha}_m f_m(\mu)
\end{align*}
for $\rho_{p, \alpha}$-almost all $\mu \in \mathbf{N}$.

On the other hand, if there exists a $k \in \set{1, \ldots, N}$ such that $\alpha(B_k) = 0$, then $f_m = \one_{B_1^{d_1} \times \cdots \times B_N^{d_N}} = 0$ $\lambda_m$-almost everywhere. Therefore, using \eqref{equation: infinite-dimensional Meixner polynomials}, Proposition~\ref{proposition: factorial moment measure Pascal process} and \eqref{equation: lambda n equals kappa lambda k}, we obtain
\begin{align*}
    \int \abs{\mathcal{M}^{p, \alpha}_m f_m} \dx \rho_{p, \alpha} \leq \sum_{k=0}^m  \binom{m}{k} \bra{\frac{p}{1-p}}^m \int \widetilde{f_m} \dx \lambda_m = 0,
\end{align*}
which implies $\mathcal{M}^{p, \alpha}_m f_m = 0$ $\rho_{p, \alpha}$-almost surely. Furthermore, $\hat{\mathcal{M}}^{p, \alpha}$ is well-defined on $L^2(\lambda_n)$, as shown by the orthogonality relation~\eqref{equation: orthogonality infinite dimensional Meixner} (proved by \cite{intertwiningConsistent} or \cite{Lytvynov2003}) for $\hat{\mathcal{M}}^{p, \alpha}$. Thus, $\hat{\mathcal{M}}^{p, \alpha} f_m = 0 = \mathcal{M}^{p, \alpha}_m f_m$ $\rho_{p, \alpha}$-almost surely.

Using the \emph{functional monotone class theorem} (see e.g. \cite[Theorem 2.12.9.]{Bogachev}) we obtain that \eqref{equation: explicit formula infinite-dimensional Meixner} holds true for $f_m \in \mathcal{C}_m$ as well. More precisely, let
\begin{align*}
    \mathcal{H} &= \set{g_m : B \to \R \text{ measurable, bounded} : \mathcal{M}^{p, \alpha}_m \hat{g}_m = \hat{\mathcal{M}}^{p, \alpha}_m \hat{g}_m}
\end{align*}
where $\hat{g}_m$ is defined to be equal to $g_m$ on $B$ and equal to zero on $\R \setminus B$. Then, $\mathcal{H}$ contains the constant functions and is closed with respect the formation of uniform and monotone limits. Furthermore, the set
\begin{align*}
    \mathcal{H}_0 &= \set{\one_{C_1 \times \cdots \times C_m} : C_1, \ldots, C_m \subset B \text{ measurable}}
\end{align*}
is closed under forming products and is contained in $\mathcal{H}$, since $\widetilde{\one}_{C_1 \times \cdots \times C_m}$ can be written as a linear combination of symmetrizations of functions of the type \eqref{equation: special f m}. Therefore, by the functional monotone class theorem, $\mathcal{H}$ equals the set of all bounded, measurable functions on $B$. 
\end{proof}

\subsection{Reversible Measures for the \texorpdfstring{$n$}{n}-Particle Dynamics }

\begin{proof}[Proof of Proposition~\ref{Proposition: correlated n-particle reversible}]
To show that $\lambda^{\otimes n}$ is reversible for the $n$-particle dynamics where $\lambda$ denotes the Lebesgue measure, we use the following straightforward computation. Let $n \in \N$ and $t > 0$ be fixed. We define $Y := (Y_k)_{k=1}^n$, where $Y_k := \sqrt{1-a} B_{k,t} + \sqrt{a} B_t$ for $k \in \set{1, \ldots, n}$. By using the construction of correlated Brownian motions given in \eqref{equation: construction correlated brownian motions}, we obtain $X_{k,t} = Y_k + x_k$. Then, using Fubini's theorem, substitution, and the fact that $-Y$ is equal in distribution to $Y$, we obtain
\begin{align*}
    \int \E \sbra{f_n(Y + x)} g_n(x) \lambda^{\otimes n}(\dd x) &= \E \sbra{\int_{\R^n}  f_n(-Y + x) g_n(x) \lambda^{\otimes n}(\dd x)} \\
    = \E \sbra{\int  f_n(x) g_n(x + Y) \lambda^{\otimes n}(\dd x)} &= \int  f_n(x) \E \sbra{g_n(x + Y)} \lambda^{\otimes n}(\dd x)
\end{align*}
for all $f_n$, $g_n \in \mathcal{C}_n$.
\end{proof}

As a next step, we prove Proposition~\ref{proposition: sticky n-particle reversible}. A reversible measure for a system of $n$ ordered sticky Brownian motions is known, as established in \cite[Theorem 4.17]{brockingtonbethe}. Starting from this reversible measure, we obtain that its symmetrization is equal to the measure $\lambda_n$ up to a constant, which is the factorial moment measure of the Pascal process up to a constant.

Let $n \in \N$ be fixed. We use the notation $\Sigma_n$ for the set of \emph{partitions} of the set $\set{1, \ldots, n}$ and we define $\Pi_n$ to be the set of \emph{ordered partitions} of $\set{1, \ldots, n}$, where
\begin{align*}
    \Pi_n := \set{(a_1, \ldots, a_k) : a_1, \ldots, a_k \in \N, k \in \N, a_1 + \ldots + a_k = n}. 
\end{align*}
For each $\pi = (a_1, \ldots, a_k) \in \Pi_n$ and each function $f_n : \R^n \to \R$, we define the function $(f_n)_\pi : \R^{k} \to \R$ by
\begin{align*}
    (x_1, \ldots, x_k) \mapsto f_n(\underbrace{x_1, \ldots, x_1}_{a_1 \text{ times}}, \ldots, \underbrace{x_k, \ldots, x_k}_{a_k \text{ times}}). 
\end{align*}
We define the measure $\lambda_\pi^{\geq}$ on $\R^n$ by
\begin{align*}
    \int f_n \dx \lambda_\pi^\geq := \int \one_{\set{x_1 \geq \ldots \geq x_k}} (f_n)_\pi(x_1, \ldots x_k) \lambda^{\otimes k}(\dd (x_1, \ldots, x_k))
\end{align*}
and put
\begin{align*}
    m_\theta^{(n)} := \sum_{\pi \in \Pi_n} \theta^{\abs{\pi} - n} \bra{\prod_{A \in \pi}} \frac{1}{\abs{A}} \lambda_\pi^\geq. 
\end{align*}
We consider the mapping $\varphi : \R^n \to \set{(x_1,\ldots, x_n) \in \R^n : x_1 \geq \ldots \geq x_n}$, which orders the components of a vector in descending order. Let $(\Omega^n, \mathcal{F}^n, (Z_t^n)_{t \geq 0}$, $(\P_x^n)_{x \in \R^n})$ denote the Markov family associated with the $n$-particle dynamics as described in the proof of Lemma~\ref{lemma: correlated Brownian to unlabeled}, abbreviate $Z_t = Z_t^n$, $\E_x = \E_x^n$ and put $Y_t := \varphi(Z_t)$. 

We define the \emph{symmetrization} of $m_\theta^{(n)}$ by $\widetilde{m}_\theta^{(n)} := \frac{1}{n!} \sum_{s \in \mathfrak{S}_n} \bra{T_s}_{\#} m_\theta^{(n)}$ where $\bra{T_s}_{\#} m_\theta^{(n)}$ denotes the push-forward of $m_\theta^{(n)}$ under the mapping $T_s: (x_1, \ldots, x_n) \mapsto (x_{s(1)}, \ldots, x_{s(n)})$. $\mathfrak{S}_n$ denotes the set of permutations on $\set{1, \ldots, n}$.

\begin{proof}[Proof of~Proposition~\ref{proposition: sticky n-particle reversible}]
To prove that the push-forward measure of the symmetrization $\widetilde{m}_\theta^{(n)}$ under the mapping $(x_1, \ldots, x_n) \mapsto \delta_{x_1} + \ldots + \delta_{x_n}$ is reversible for unlabeled sticky Brownian motions $(\eta_t)_{t \geq 0}$, it suffices to verify
\begin{align*}
    \int (P_t^{[n]} f_n) g_n \dx \lambda_n = \int  ( P_t^{[n]} g_n) f_n \dx \lambda_n
\end{align*}
for symmetric $f_n, g_n \in \mathcal{C}_n$. Indeed, using $f_n \circ \varphi = f_n$ and $\varphi_{\#} \widetilde{m}_\theta^{(n)} = m_\theta^{(n)}$, we obtain
\begin{align}
    \label{equation: reduce to ordered sticky}
    \notag
    \int f_n(x) \E_x \sbra{ g_n(Z_t) } \widetilde{m}_\theta^{(n)}(\dd x) &= \int f_n(\varphi(x)) \E_x \sbra{ g_n(Z_t) }  \widetilde{m}_\theta^{(n)}(\dd x) \\
    \notag
    &= \int f_n(\varphi(x)) \E_{\varphi(x)} \sbra{ g_n(Y_t) } \widetilde{m}_\theta^{(n)}(\dd x) \\
    &= \int f_n(y) \E_{y} \sbra{g_n(Y_t) } m_\theta^{(n)}(\dd y).
\end{align}
Thereby, in the second equation, we use the strong consistency property \eqref{equation: strong consistency}. To be more precise, let $x \in \R^n$ be fixed and let $s \in \mathfrak{S}_n$ be such that $T_s(x) = \varphi(x)$. By applying \eqref{equation: strong consistency} and using the fact that $g_n \circ T_s = g_n$, we obtain
\begin{align*}
    \E_x\sbra{g_n(Z_t)} = \E_x\sbra{g_n(T_s(Z_t))} = \E_{T_s(x)} \sbra{g_n(Z_t)} = \E_{\varphi(x)} \sbra{g_n(Y_t)}. 
\end{align*}
Brockington and Warren demonstrated in \cite[Theorem 4.17]{brockingtonbethe} that $m_\theta^{(n)}$ is reversible for $n$ ordered uniform sticky Brownian motions with stickiness $\theta$. As a result, \eqref{equation: reduce to ordered sticky} is symmetric in $f_n$ and $g_n$.

Hence, to complete the proof, it only remains to show that the symmetrization of $m_\theta^{(n)}$ coincides with $\lambda_n$ up to a constant. Specifically, we claim that $\widetilde{m}_\theta^{(n)} = \frac{1}{\theta^n n!} \lambda_n$. To recall, we construct $\lambda_n$ using the measure $\alpha := \theta \lambda$ through \eqref{equation: definition lambda n}. Note that both $\widetilde{m}_\theta^{(n)}$ and $\lambda_n$ are invariant under permutation of variables. Hence, it suffices to prove that $\int f_n \dx \widetilde{m}_\theta^{(n)} = \int f_n \dx \lambda_n$
holds for all symmetric functions $f_n \in \mathcal{C}_n$.

By using the fact that the Lebesgue measure is diffuse, meaning that $\lambda(\set{x}) = 0$ for all $x \in \R$, we obtain for all $\pi=(a_1, \ldots, a_k) \in \Pi_n$
\begin{align*}
    \int (f_n)_\pi \dx \lambda^{\otimes k} &= \sum_{s \in \mathfrak{S}_k} \int \one_{\set{x_{s(1)} \geq \ldots \geq x_{s(k)}}} (f_n)_\pi(x_1, \ldots, x_k)  \lambda^{\otimes k}(\dd (x_1, \ldots, x_k)) \\
    &= \sum_{s \in \mathfrak{S}_k} \int \one_{\set{x_1 \geq \ldots \geq x_k}} (f_n)_\pi(x_{s^{-1}(1)}, \ldots, x_{s^{-1}(k)}) \lambda^{\otimes k}(\dd (x_1, \ldots, x_k)) \\
    &= \sum_{s \in \mathfrak{S}_k} \int f_n \dx \lambda_{\bra{a_{s(1)}, \ldots, s_{s(k)}}}^\geq,
\end{align*}
as $(f_n)_{\pi}$ is symmetric. 

We observe that every permutation $(a_{s(1)}, \ldots, a_{s(k)})$ of an ordered partition is itself an ordered partition. Using this fact along with \eqref{equation: definition lambda n}, we get
\begin{align*}
    \int f_n \dx \widetilde{m}_\theta^{(n)} &= \int f_n \dx m_\theta^{(n)} 
    = \sum_{\pi =(a_1, \ldots, a_k) \in \Pi_n} \theta^{k - n} \frac{1}{k!} \frac{1}{a_1 \cdots a_k} \int (f_n)_\pi \dx \lambda^{\otimes k} \\
    &= \frac{1}{ n!} \sum_{\sigma = \set{B_1, \ldots, B_k} \in \Sigma_n} \theta^{k-n} (\abs{B_1}-1)! \cdots (\abs{B_k}-1)! \int (f_n)_{\sigma} \lambda^{\otimes k} = \frac{1}{\theta^n n!} \int f_n \dx \lambda_n. 
\end{align*}
We can switch from the summation over ordered partitions $\Pi_n$ to the summation over partitions $\Sigma_n$, since the equality
\begin{align*}
    & \abs{ \set{B_1, \ldots, B_k} \in \Sigma_n : \set{\abs{B_1}, \ldots, \abs{B_k}} = \set{l_1, \ldots, l_k}  } \\
    &\hspace{4em} = \frac{n!}{k!} \frac{1}{l_1! \cdots l_k!} \abs{ (a_1, \ldots, a_k) \in \Pi_n : \set{a_1, \ldots, a_k} = \set{l_1, \ldots, l_k}  }, 
\end{align*}
holds for all $l_1, \ldots, l_k \in \N$,  $k \in \N$ with $l_1 + \ldots + l_k = n$.
\end{proof}

\subsubsection*{Acknowledgments}
I thank Sabine Jansen, Frank Redig, Simone Floreani and Jan Philipp Neumann for helpful discussions. This work was supported under Germany’s Excellence Strategy - EXC-2111 - 390814868.

\newcommand{\etalchar}[1]{$^{#1}$}
\providecommand{\bysame}{\leavevmode\hbox to3em{\hrulefill}\thinspace}
\providecommand{\MR}{\relax\ifhmode\unskip\space\fi MR }
\providecommand{\MRhref}[2]{%
  \href{http://www.ams.org/mathscinet-getitem?mr=#1}{#2}
}
\providecommand{\href}[2]{#2}

\end{document}